\newcommand{\FS}[2]{\displaystyle\frac{#1}{#2}}
\newtheoremstyle{lemma}{\topsep}{\topsep}%
     {}
     {}
     {\bfseries}
     {}
     {0.1em}
     {\thmname{#1}\thmnumber{ #2}\thmnote{ #3}}
\theoremstyle{lemma}  
\newtheorem{theorem}{Theorem}[section]    
\newtheorem{lemma}[theorem]{Lemma}
\newtheorem{corollary}[theorem]{Corollary}
\numberwithin{equation}{section}
\title
{\textbf{Forcing and anti-forcing polynomials of perfect matchings of a pyrene system
\thanks{Supported by the Science and Technology Research Foundation of the Higher Education Institutions of Ningxia, China (Grant No. NGY2018-139), and
the Natural Science Foundation of Ningxia, China (Grant No. 2019AAC03124).}}}
\author{{Kai Deng$^{a,}$\thanks{
Corresponding author. \newline
\emph{E-mail address}: dengkai04@126.com, lsh1808@163.com, zhouxiangqian0502@126.com}, ~Saihua Liu$^{b}$, ~Xiangqian Zhou$^{c}$ }\\
{\footnotesize $^{a}$School of Mathematics and Information Science,
North Minzu University,}
\\{\footnotesize Yinchuan, Ningxia 750027, P. R.~China}
\\{\footnotesize $^{b}$Department of Mathematics, Wuyi University,
Jiangmen, Guangdong 529020, P. R.~China}
\\{\footnotesize $^{c}$School of Mathematics and Statistics, Huanghuai University, Zhumadian, Henan 463000, P. R.~China}
}
\date{}
\begin{document}
\maketitle
\begin{abstract}
The forcing number of a perfect matching of a graph was introduced by Harary et al.,
which originated from Klein and Randi\'{c}'s ideal  of innate degree of freedom of Kekul\'{e} structure in molecular graph.
On the opposite side in some sense,
Vuki\v{c}evi\'{c} and Trinajsti\'{e} proposed the anti-forcing number of a graph, afterwards Lei et al. generalized this idea to single perfect matching.
Recently the forcing and anti-forcing polynomials of perfect matchings of a graph were proposed as counting polynomials for perfect matchings with the same forcing number and anti-forcing
number respectively. In this paper,
we obtain the explicit expressions of forcing and anti-forcing
polynomials of a pyrene system.
As consequences, the forcing and anti-forcing spectra of a pyrene system
are determined.
\vskip 0.1 in

\noindent \textbf{Key words:} Perfect matching;
Forcing polynomial; Anti-forcing polynomial;
Hexagonal system
\end{abstract}

\section{Introduction}
Let $G$ be a simple graph with vertex set $V(G)$ and edge set $E(G)$. A {\em perfect matching} of $G$ is a set of independent edges which covers all vertices of $G$. A perfect matching coincides with a Kekul\'{e}
structure of a conjugated molecule graph (the graph representing the carbon-atoms),
Klein and Randi\'{c} \cite{Klein, Randic} discovered
the phenomenon that a Kekul\'{e} structure
can be determined by a few number of fixed double bonds,
and they defined the {\em innate degree of freedom} of a
Kekul\'{e} structure as
the minimum number of fixed double bonds required
to determine it.
Further, the sum over innate degree of freedom of all Kekul\'{e} structures
of a graph was called the \emph{degree of freedom} of the graph,
which was proposed as a novel invariant to estimate the resonance energy.
In 1991, Harary, Klein and \v{Z}ivkovi\'{c}\cite{Harary}
extended the concept ``degree of freedom" to a graph $G$ with a perfect matching,
and renamed it as the \emph{forcing number} of a perfect matching $M$,
denoted by $f(G,M)$.
Over the past 30 years,
many researchers were attracted to the study on
the forcing numbers of perfect matchings of a graph \cite{Che},
in addition, the anti-forcing number \cite{VT1, VT2, Zh2} was proposed from
the point of opposite view of forcing number.
In general, to compute the forcing number of a perfect matching of a bipartite graph with the maximum degree 3 is an NP-complete problem \cite{Adams},
and to compute the anti-forcing
number of a perfect matching of a bipartite graph with the maximum degree 4 is also an NP-complete problem \cite{ZhangD2}.
But the particular structure of a graph enables
us to do much better.
In this paper, we will calculate the forcing and
anti-forcing polynomials of a pyrene system,
as consequences, the exact values of forcing and anti-forcing
numbers of a perfect matching of the pyrene system are determined.

A {\em forcing set} $S$ of a perfect matching
$M$ of a graph $G$ is a subset of $M$ such that
$S$ is contained in no other perfect matchings of $G$.
Therefore, $f(G,M)$ equals the smallest cardinality over all forcing sets of $M$.
The {\em minimum} (resp. {\em maximum}) \emph{forcing number} of $G$ is the minimum (resp. maximum) value over forcing numbers
of all perfect matchings of $G$, denoted by $f(G)$ (resp. $F(G)$).
Afshani et al. \cite{Afshani} proved that
the smallest forcing number problem of graphs is NP-complete for bipartite graphs with maximum degree four.
In order to investigate the distribution of forcing numbers
of all perfect matchings of a graph $G$, the \emph{forcing spectrum} \cite{Adams}
of $G$ is proposed, denoted by Spec$_f(G)$, which is the collection of
forcing numbers of all perfect matchings of $G$.
Further, Zhang et al. \cite{ZhangZhao} introduced the \emph{forcing polynomial} of a graph, which can enumerate the number of perfect matchings
with the same forcing number.

A {\em hexagonal system} (or \emph{benzenoid}) is a finite 2-connected planar bipartite graph in which each interior face is
surrounded by a regular hexagon of side length one.
Hexagonal systems are extensively used in the study of benzenoid hydrocarbons \cite{Cyvin}, as they  properly represent
the skeleton of such molecules.
Zhang and Li \cite{Li} and Hansen and Zheng \cite{Hansen} characterized
independently the hexagonal systems with minimum forcing number 1,
and the forcing spectrum of such a hexagonal system was determined by Zhang and Deng \cite{ZhangD}.
Afterwards Zhang and Zhang \cite{Zhang} characterized plane elementary
bipartite graphs with minimum forcing number 1.
Xu et al. \cite{ZhangF} proved that the maximum forcing number of
a hexagonal system equals its \emph{Clar number}
(i.e. the number of hexagons in a maximum resonance set),
which is an invariant used to measure the stability of benzenoid hydrocarbons.
Similar results also hold for polyomino graphs \cite{Zhou2} and (4,6)-fullerenes \cite{Shi1}.
Zhang et al. \cite{Ye} proved that
the minimum forcing number of a fullerene graph is
not less than 3, and the lower bound can be achieved by infinitely many fullerene graphs.
Randi\'{c}, Vuki\v{c}evi\'{c} and Gutman \cite{c70, c72, c60}
determined the forcing spectra of fullerene graphs
C$_{60}$, C$_{70}$ and C$_{72}$,
in particular there is a single Kekul\'{e} structure of C$_{60}$
 that has the highest degree of freedom 10
 such that all hexagons of C$_{60}$ have three double CC bonds,
which represents the Fries structure of C$_{60}$ and is the most important valence structure.
For forcing polynomial, only a few types of hexagonal systems have been studied, such as catacondensed hexagonal systems \cite{ZhangZhao}
and benzenoid parallelogram \cite{Zhaos}.
For more results on forcing number, we refer the reader to see
\cite{Lam, Matthew, Wang, Jiang, Che1, Jiang1, Kleinerman, Pachter, Shi,
Zhou1, Zhou, Zhaos2}.

Given a perfect matching $M$ of a graph $G$.
A subset $S \subseteq E(G)\setminus M$ is called an \emph{anti-forcing set} of $M$
if $M$ is the unique perfect matching of $G-S$.
The smallest cardinality over all anti-forcing sets of $M$
is called  the \emph{anti-forcing number} of $M$, denoted by $af(G, M)$.
The minimum (resp. maximum)
anti-forcing number $af(G)$ (resp. $Af(G)$) of graph $G$ is the minimum (resp. maximum) value of
anti-forcing numbers over all perfect matchings of $G$.
The (minimum) anti-forcing number of a graph was first introduced
by Vuki\v{c}evi\'{c} and Trinajsti\'{e} \cite{VT1, VT2} in 2007-2008.
Actually, the hexagonal systems with minimum anti-forcing number 1
had been characterized by Li \cite{Li2} in 1997,
where he called such a hexagonal system has a forcing single edge.
Deng \cite{Deng1, Deng2} obtained the minimum anti-forcing numbers of benzenoid chains and double benzenoid chains.
Zhang et al. \cite{ZhangQQ} computed the minimum anti-forcing number of catacondensed phenylene.
Yang et al. \cite{Yang} showed that a fullerene graph has the minimum anti-forcing number at least 4, and characterized the fullerene graphs with
minimum anti-forcing number 4.

In 2015, Lei et al. \cite{Zh2}
generalized the anti-forcing number to single perfect matching of a graph.
By an analogous manner as the forcing number,
the \emph{anti-forcing spectrum} of a graph $G$ was proposed,
denoted by Spec$_{af}(G)$, which is the collection of
anti-forcing numbers of all perfect matchings of $G$.
Further, Hwang et al. \cite{Hwang} introduced the
\emph{anti-forcing polynomial} of a graph, which can enumerate the number of perfect matchings with the same anti-forcing number.
Lei et al. \cite{Zh2} proved that the maximum anti-forcing number of a hexagonal system equals its Fries number,
which can measure the stability of benzenoid hydrocarbons.
Analogous results were obtained on (4,6)-fullerenes \cite{Shi1}.
Further more, two tight
upper bounds on the maximum anti-forcing numbers of graphs were obtained \cite{ZhangD1, Shi2}.
The anti-forcing spectra of some types of hexagonal systems
were proved to be continuous, such as monotonic constructable hexagonal systems \cite{ZhangD2}, catacondensed hexagonal systems \cite{ZhangD3}.
Zhao and Zhang \cite{Zhaos1, Zhaos2} computed the anti-forcing polynomials of benzenoid systems with minimum forcing number 1 and some rectangle grids.

In this paper, we will calculate the forcing and anti-forcing polynomials
of a pyrene system.
In section 2, as a preparation,
some basic results on forcing and anti-forcing numbers are introduced,
and we characterize the maximum set of disjoint $M$-alternating cycles
and the maximum set of compatible $M$-alternating cycles with respect to
 a perfect matching $M$ of a pyrene system.
In section 3, we give a recurrence formula
for the forcing polynomial of a pyrene system,
and derive the explicit expressions of
forcing polynomial and degree of freedom of a
pyrene system. As corollaries,
the minimum forcing number, maximum forcing number and
the forcing spectrum of a pyrene system are determined,
and an asymptotic behavior of degree of freedom is revealed.
In section 4, we obtain a recurrence formula
for the anti-forcing polynomial of a pyrene system,
and derive the explicit expressions of
anti-forcing polynomial and the sum over the anti-forcing numbers of all
perfect matchings of a pyrene system.
As consequences,
the minimum anti-forcing number, maximum anti-forcing number and
the anti-forcing spectrum of a pyrene system are determined,
and an asymptotic behavior of
the sum over the anti-forcing numbers of all
perfect matchings of a pyrene system is obtained.

\section{Preliminaries}

Let $M$ be a perfect matching of a graph $G$.
A cycle $C$ of $G$ is called an {\em $M$-alternating cycle}
if the edges of $C$ appear alternately in $M$ and $E(G)\setminus M$.
If $C$ is an $M$-alternating cycle,
then the symmetric difference $M\triangle C$ is the another perfect matching
of $G$, here $C$ may be viewed as its edge set.
Let $c(M)$ be the maximum number of disjoint $M$-alternating cycles of $G$.
Since any forcing set of $M$ has to contain
at least one edge of each $M$-alternating cycle, $f(G,M)\geq c(M)$.
Pachter and Kim \cite{Pachter} proved the following theorem
by using the minimax theorem on feedback set \cite{Younger}.

\begin{theorem}\label{cycle}{\rm\cite{Pachter}}{\bf .}
Let $M$ be a perfect matching in a planar bipartite graph $G$. Then $f(G,M)=c(M)$.
\end{theorem}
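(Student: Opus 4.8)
The inequality $f(G,M)\geq c(M)$ is already recorded in the discussion above, so the plan is to establish the reverse inequality $f(G,M)\leq c(M)$ and combine the two. My approach would be to translate the matching-theoretic quantities into the language of directed cycles via a standard orientation, and then invoke the planar feedback minimax theorem \cite{Younger}. Let $(X,Y)$ be the bipartition of $G$, and form a digraph $D$ on $V(G)$ by orienting every edge of $M$ from its $X$-endpoint to its $Y$-endpoint and every edge of $E(G)\setminus M$ from its $Y$-endpoint to its $X$-endpoint. Since assigning directions to the edges of a fixed plane embedding of $G$ leaves the embedding intact, $D$ is planar. The first key step is the correspondence between $M$-alternating cycles and directed cycles of $D$: traversing any cycle of $G$, bipartiteness forces its vertices to alternate between $X$ and $Y$, and under the chosen orientation a cycle is consistently oriented precisely when its edges alternate between $M$ and $E(G)\setminus M$. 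Hence the $M$-alternating cycles are exactly the directed cycles of $D$. Moreover, two $M$-alternating cycles that share a vertex $v$ must both use the unique matching edge at $v$, so ``disjoint'' $M$-alternating cycles are exactly vertex-disjoint directed cycles, and $c(M)$ equals the maximum number of vertex-disjoint directed cycles of $D$.

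The second step identifies forcing sets with feedback vertex sets. A set $S\subseteq M$ fails to be a forcing set exactly when some other perfect matching contains $S$, equivalently when $M\triangle C$ contains $S$ for some $M$-alternating cycle $C$, i.e. when some $M$-alternating cycle avoids all edges of $S$. Therefore $S$ is a forcing set if and only if every directed cycle of $D$ meets $S$. In $D$ every vertex of $X$ has out-degree $1$ (its unique matching edge) and every vertex of $Y$ has in-degree $1$, so every directed cycle through a vertex $x\in X$ must use the matching edge leaving $x$. Consequently, hitting all directed cycles by edges of $M$ is the same as hitting all of them by the corresponding set of tail vertices in $X$, which makes $f(G,M)$ equal to the minimum size of a feedback vertex set of $D$.

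With both sides re-expressed, I would finish by applying the minimax theorem for feedback sets in a planar digraph \cite{Younger}: the minimum number of vertices meeting every directed cycle equals the maximum number of vertex-disjoint directed cycles. If one prefers the arc formulation of Lucchesi--Younger type, a routine vertex-splitting reduction, which preserves planarity because each split replaces a vertex by a single arc, converts the vertex/vertex-disjoint-cycle statement into the arc/arc-disjoint one. Applying the theorem to $D$ yields $f(G,M)=c(M)$.

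The main obstacle is not any single computation but getting the reduction exactly right: one must check that planarity survives the orientation (and the splitting, if it is used), and, most delicately, that the out-degree-one structure at the $X$-vertices lets feedback \emph{vertex} sets and forcing \emph{edge} sets be matched up without loss or double counting, so that the planar minimax theorem can be applied cleanly to deliver the desired equality.
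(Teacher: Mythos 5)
Your proposal is correct, and it is essentially the proof the paper points to rather than reproduces: Theorem~\ref{cycle} is quoted from Pachter and Kim \cite{Pachter}, whose argument is exactly yours---orient the edges of $M$ from $X$ to $Y$ and the edges of $E(G)\setminus M$ from $Y$ to $X$, identify $M$-alternating cycles with directed cycles and forcing sets with feedback sets (using that every directed cycle through $x\in X$ must use the unique out-arc of $x$), and then invoke the planar feedback minimax theorem derived from Lucchesi--Younger \cite{Younger}. One caveat worth tightening: vertex-splitting does \emph{not} preserve planarity in arbitrary planar digraphs (in-arcs and out-arcs can interleave in the rotation around a vertex), but in your digraph every $X$-vertex has out-degree $1$ and every $Y$-vertex has in-degree $1$, so each split is merely a subdivision of a matching arc---or, alternatively, you can skip splitting altogether, since for $M$-alternating cycles arc-disjointness and vertex-disjointness coincide and the arc version of the minimax applies directly.
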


\begin{figure}[http]
\centering
\subfigure[$H_{n}$]{
    \label{py}
 \includegraphics[height=35mm]{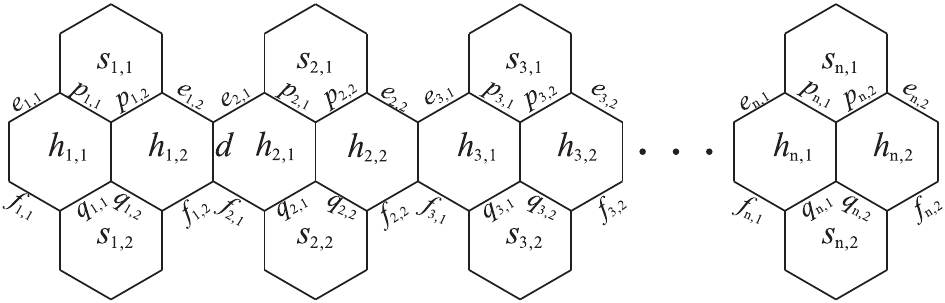}}
\vspace{5mm}
\subfigure[$G_{n}$]{
    \label{py1}
 ~~~~~~\includegraphics[height=35mm]{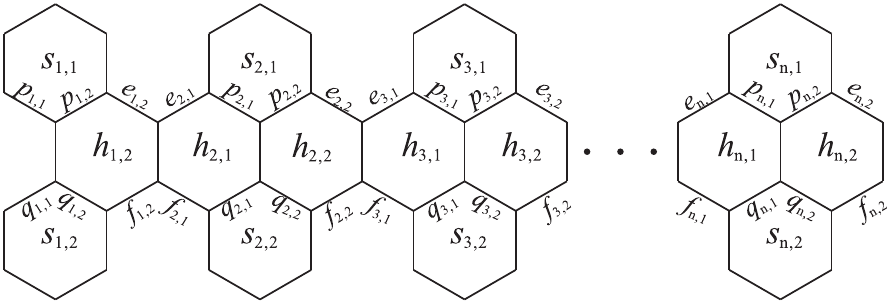}}
 \caption{Pyrene system $H_{n}$ with $n$ pyrene (a) and the auxiliary graph $G_{n}$ (b)}
\label{Pyrene}
\end{figure}

A pyrene system with $n$ pyrene fragments is denoted by $H_{n}$,
see Fig. \ref{py}. $H_{n}$ is a hexagonal system
with perfect matchings, by Theorem
\ref{cycle} $f(H_n,M)=c(M)$ for any perfect matching $M$
of $H_n$.

\begin{lemma}\label{resonance}\cite{Guo, Zhang}{\bf .}
Let $M$ be a perfect matching of a hexagonal system $H$,
$C$ an $M$-alternating cycle in $H$.
Then there is an $M$-alternating hexagon in the interior of $C$.
\end{lemma}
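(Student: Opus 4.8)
The plan is to reduce the statement to a purely internal question about the benzenoid bounded by $C$, and then to induct on the number of enclosed hexagons. First I would observe that no edge of $M$ crosses $C$: since $C$ is $M$-alternating, every vertex of $C$ is already saturated by the edge of $M$ lying on $C$, so the $M$-edge at each vertex of $C$ is itself an edge of $C$, and no matching edge joins $C$ to its interior or exterior. Consequently, writing $H_C$ for the subgraph of $H$ induced by $C$ together with all vertices in its interior, the set $M_C:=M\cap E(H_C)$ is a perfect matching of $H_C$, and $C$ is exactly the perimeter of the benzenoid $H_C$ and is $M_C$-alternating. The hexagons lying in the interior of $C$ are precisely the hexagonal faces of $H_C$, so it suffices to prove: every benzenoid whose perimeter is alternating with respect to a perfect matching has an alternating hexagonal face.

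I would prove this by induction on the number $k$ of hexagons of $H_C$. If $k=1$ then $C$ itself is a hexagon and we are done. For $k\geq 2$, among all $M$-alternating cycles whose interiors are contained in the closed interior of $C$, I would choose one, $D$, enclosing the fewest hexagons, and suppose for contradiction that $D$ encloses at least two. Then $D$ has a peripheral hexagon $s$, meaning $s$ lies in the interior of $D$, shares with the boundary of $D$ a path $P$, and has its complementary path $Q=s\setminus P$ strictly inside $D$. If $s$ is $M$-alternating we are finished; otherwise I would form the cycle $C'=(D\setminus P)\cup Q$ (that is, $D\triangle s$ when $s\cap D$ is connected) and argue that $C'$ is an $M$-alternating cycle enclosing strictly fewer hexagons, contradicting the minimality of $D$.

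The crux is verifying that $C'$ is genuinely $M$-alternating. A local computation at each of the two vertices where $P$ meets $Q$ shows that, because $D$ alternates along $P$, the cycle $C'$ inherits the alternation across these junctions exactly when $s$ fails to alternate there; this is the mechanism already visible in the two-hexagon (naphthalene) case, where the perimeter being alternating forces one of the two hexagons to be alternating. The obstacles I anticipate are (i) guaranteeing the existence of a peripheral hexagon $s$ for which $Q$ is itself an alternating path and the junction conditions simultaneously hold, which is where the hypothesis that the \emph{whole} perimeter alternates must be used rather than only a local fact, and (ii) the case where $s\cap D$ is disconnected, so that $D\triangle s$ splits into several cycles; here I would retain the component that still encloses interior hexagons and check that it is alternating. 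Once the reduced cycle $C'$ is shown to be $M$-alternating, the induction hypothesis applied to its interior yields an $M$-alternating hexagon inside $C'\subseteq D$, hence inside $C$, completing the argument.
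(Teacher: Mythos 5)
You should first be aware that the paper itself offers no proof of Lemma \ref{resonance}: it is imported wholesale from \cite{Guo, Zhang}, so your attempt has to stand on its own. Its first half does: the observation that every vertex of an $M$-alternating cycle is matched by an edge of that cycle, hence that $M\cap E(H_C)$ is a perfect matching of the benzenoid $H_C$ bounded by $C$, is correct, and so is the extremal framing (a minimal $M$-alternating cycle $D$ inside $C$ enclosing exactly one hexagon is that hexagon). The problem is that everything after this is exactly the hard part of the lemma, and the dichotomy your contradiction rests on --- ``either the peripheral hexagon $s$ is $M$-alternating or $D\triangle s$ is an $M$-alternating cycle'' --- is false.

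Here is the concrete failure. Write $P=s\cap D$ with endpoints $x,y$ and $Q=s\setminus P$. Since every vertex of the alternating cycle $D$ is matched by a $D$-edge, a necessary condition for $D\triangle s$ to be $M$-alternating is that \emph{both} $x$ and $y$ be matched by $D$-edges lying outside $P$; if the matching edge of $x$ lies in $P$, then in $D\triangle s$ the vertex $x$ is incident to two edges, neither in $M$ (its matching edge having been deleted), so $D\triangle s$ is not alternating. This condition fails in ordinary configurations where $s$ is not alternating either. For example, if $P$ consists of exactly two edges $xm,my$, alternation of $D$ puts exactly one of them, say $xm$, in $M$; then $x$ loses its matching edge under the flip, so $D\triangle s$ is not alternating, while $y$ is matched by a $D$-edge outside $s$, so the two edges of $s$ at $y$ both avoid $M$ and $s$ is not alternating: both horns of your dichotomy fail at once. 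Nor does a cleverer choice of $s$ immediately rescue this: take $s$ to be the hexagon $uabcde$ under the topmost-then-leftmost vertex $u$ of $D$, with $D\supseteq\{az,au,ue,ed,dc,cw\}$ alternating as $az\in M$, $au\notin M$, $ue\in M$, $ed\notin M$, $dc\in M$, $cw\notin M$ (here $z$ is the upper-right neighbour of $a$ and $w$ the lower neighbour of $c$); then $P$ is the four-edge path $a\,u\,e\,d\,c$, the junction $c$ is matched by $dc\in P$, so $D\triangle s$ fails at $c$, and $s$ fails too since $ua,ab\notin M$ are consecutive on $s$. In such a configuration the alternating hexagon promised by the lemma sits deeper inside $D$, and reaching it requires either a further construction of auxiliary alternating cycles or a global geometric induction --- precisely the case analysis that constitutes the proof in \cite{Guo, Zhang}. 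In other words, your ``obstacles (i) and (ii)'' are not loose ends to be checked by a local computation at the junctions; obstacle (i) \emph{is} the theorem, and the proposal as written assumes it rather than proves it.
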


Let $H$ be a hexagonal system with a perfect matching $M$.
A set of disjoint $M$-alternating hexagons of $H$ is called an \emph{$M$-resonant set},
the size of a \emph{maximum $M$-resonant set} is denote by $h(M)$.

\begin{lemma}\label{feh}{\bf .}
Let $M$ be a perfect matching of
the pyrene system $H_{n}$.
Then $f(H_{n},M)=h(M)$.
\end{lemma}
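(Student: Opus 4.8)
The plan is to read the statement through the two results already available, Theorem~\ref{cycle} and Lemma~\ref{resonance}, and reduce everything to the single identity $c(M)=h(M)$. Since $H_{n}$ is a planar bipartite graph, Theorem~\ref{cycle} gives $f(H_{n},M)=c(M)$, so it suffices to show that the maximum number of disjoint $M$-alternating cycles equals the size of a maximum $M$-resonant set. One inequality is immediate: every $M$-alternating hexagon is an $M$-alternating cycle, so any $M$-resonant set is a collection of disjoint $M$-alternating cycles, whence $h(M)\le c(M)=f(H_{n},M)$. (Equivalently, any forcing set must contain an edge of each hexagon of a resonant set, and these hexagons are pairwise disjoint, which already forces $f(H_{n},M)\ge h(M)$.)

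The content is the reverse inequality $c(M)\le h(M)$, and I would prove the stronger statement ``in any hexagonal system $H$ with perfect matching $M$ one has $c(M)\le h(M)$'' by induction on $|V(H)|$, so that it applies to subsystems of $H_{n}$. Fix a maximum family $\{C_{1},\dots,C_{c(M)}\}$ of disjoint $M$-alternating cycles. Because $H$ is plane and the cycles are disjoint, the containment relation on their interiors is a forest, so I may speak of the outermost cycles $C_{1},\dots,C_{p}$, whose closed interiors $H_{C_{1}},\dots,H_{C_{p}}$ are pairwise vertex-disjoint. Each $H_{C_{j}}$ is itself a simply connected hexagonal system, and $M$ restricted to it is a perfect matching since no edge of $M$ crosses an $M$-alternating cycle. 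If none of the outermost cycles is the whole perimeter of $H$, then each $H_{C_{j}}$ is a proper subsystem; the family cycles are partitioned among the $H_{C_{j}}$, say $k_{j}$ of them lie in $\mathrm{int}[C_{j}]$ with $\sum_{j}k_{j}=c(M)$, and the induction hypothesis applied inside each $H_{C_{j}}$ produces $k_{j}$ disjoint $M$-alternating hexagons there. Hexagons coming from different $H_{C_{j}}$ are automatically disjoint, so altogether we obtain $c(M)$ disjoint $M$-alternating hexagons, giving $c(M)\le h(M)$.

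The hard part will be the remaining case, where the perimeter $P$ of $H$ is itself one of the family cycles and hence is $M$-alternating. Here recursing on $H_{P}=H$ does not reduce the number of vertices, so the induction stalls. In this case the inner family cycles lie strictly inside $P$ and, by the argument above, can be converted into disjoint $M$-alternating hexagons contained in the union of their enclosing outermost inner cycles; what remains is to produce one more $M$-alternating hexagon, disjoint from those, to account for $P$. This is exactly an ``annular'' strengthening of Lemma~\ref{resonance}: I must find an $M$-alternating hexagon lying in the region enclosed by $P$ but outside all the immediately nested $M$-alternating cycles, rather than merely somewhere in $\mathrm{int}(P)$ as Lemma~\ref{resonance} guarantees. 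I expect this to be the genuine technical core of the proof.

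To secure it I would peel the outer boundary of $H$: a hexagonal system always has a hexagon attached to its perimeter in a ``convex'' way, and I would analyze the alternation of $M$ along $P$ to locate, in the outer layer of hexagons of $H$, one that is $M$-alternating (flipping $M$ along $P$ via $M\triangle P$ and comparing the two matchings near the boundary should exhibit such a hexagon). Since this hexagon sits in the outer annulus, it is disjoint from the inner hexagons already found, completing the count and closing the induction. Combining the two inequalities gives $c(M)=h(M)$, and with $f(H_{n},M)=c(M)$ from Theorem~\ref{cycle} we conclude $f(H_{n},M)=h(M)$.
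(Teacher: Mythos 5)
Your reduction to $c(M)=h(M)$, the easy inequality $h(M)\le c(M)$, and the treatment of the non-perimeter case are all sound, but the route you propose for the hard inequality cannot be completed, because the ``stronger statement'' you induct on --- $c(M)\le h(M)$ for \emph{every} hexagonal system --- is false. Take coronene: a central hexagon $v_1v_2\cdots v_6$ surrounded by six hexagons, with perimeter $u_1u_2\cdots u_{18}$ and spokes $v_iu_{3i-2}$. Let $M$ consist of the central edges $v_2v_3,\,v_4v_5,\,v_6v_1$ together with the perimeter edges $u_1u_2,\,u_3u_4,\,\dots,\,u_{17}u_{18}$. This is a perfect matching; the central hexagon and the perimeter are vertex-disjoint $M$-alternating cycles, so $c(M)\ge 2$, yet each of the six outer hexagons has two consecutive edges outside $M$ (a spoke followed by a central or perimeter edge not in $M$), so the central hexagon is the \emph{only} $M$-alternating hexagon and $h(M)=1$. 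This is exactly your ``perimeter case'': the perimeter is an alternating family cycle, one alternating cycle is nested inside it, and there is no $M$-alternating hexagon in the annulus between them. So the ``annular strengthening of Lemma~\ref{resonance}'' that you correctly single out as the technical core is not a missing technicality --- it is false, and no boundary-peeling or matching-flipping argument will establish it.

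Consequently any correct proof must use the specific structure of $H_n$, which your proposal never invokes. The relevant fact is that the internal (non-perimeter) vertices of $H_n$ are exactly the $2n$ endpoints of the $n$ vertical edges shared by $h_{i,1}$ and $h_{i,2}$, and these induce a disjoint union of $K_2$'s; since any vertex lying strictly inside a cycle of $H_n$ must be an internal vertex of $H_n$, no cycle of $H_n$ can contain another cycle disjoint from it in its interior. Nesting --- the one phenomenon that breaks your induction and powers the coronene example --- simply cannot occur in a pyrene system. This is what makes the paper's short argument work: take a maximum family $\mathcal{A}$ of disjoint $M$-alternating cycles with as many hexagons as possible; if some member $C$ is not a hexagon, Lemma~\ref{resonance} gives an $M$-alternating hexagon $h$ inside $C$, and $h$ is disjoint from every other member because all other members lie outside $C$ (none can be nested inside), so $(\mathcal{A}\setminus\{C\})\cup\{h\}$ contradicts the extremal choice, and then Theorem~\ref{cycle} finishes. (The paper also leaves the disjointness of $h$ from the other members unjustified; your worry is legitimate there, but in $H_n$ it is settled by the no-nesting observation above.) If you wish to salvage your induction, the repair is the same: restrict it to subsystems of $H_n$ and invoke no-nesting, whereupon your troublesome perimeter case collapses --- a family containing the perimeter of such a subsystem consists of that single cycle, and Lemma~\ref{resonance} applies directly.
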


\begin{proof}
Let $\mathcal{A}$ be a maximum set of disjoint $M$-alternating cycles
containing hexagons as more as possible.
By Theorem \ref{cycle}, $f(H_{n},M)=|\mathcal{A}|$.
We claim that $\mathcal{A}$ is an $M$-resonance set,
otherwise $\mathcal{A}$ contains a non-hexagonal cycle $C$.
By Lemma \ref{resonance}
there is an $M$-alternating hexagon $h$ in the interior of $C$.
Note that $\mathcal{A^{\prime}}=(\mathcal{A}\setminus\{C\})\cup\{h\}$ also is a maximum set of disjoint $M$-alternating cycles,
 but $\mathcal{A^{\prime}}$ contains more hexagons than $\mathcal{A}$, a contradiction.
We have $|\mathcal{A}|\leq h(M)\leq f(H_{n},M)=|\mathcal{A}|$,
i.e. $f(H_{n},M)=h(M)$.
\end{proof}

Let $M$ be a perfect matching of a graph $G$.
A set $\mathcal{A}$ of
$M$-alternating cycles of $G$ is called a compatible $M$-alternating set if any two cycles of $\mathcal{A}$ either are disjoint or intersect only
at edges in $M$. Let $c^{\prime}(M)$ denote the maximum cardinality over all compatible $M$-alternating sets of $G$. Since any anti-forcing set of $M$ must contain
at least one edge of each $M$-alternating cycle, $af(G,M)\geq c^{\prime}(M)$.
Lei et al. \cite{Zh2} gave the following minimax
theorem.

\begin{theorem}\label{minimax2}\cite{Zh2}{\bf .}
Let $G$ be a planar bipartite graph with a perfect matching $M$. Then
$af(G,M)=c^{\prime}(M)$.
\end{theorem}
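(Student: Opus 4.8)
The plan is to mirror the proof of Theorem~\ref{cycle} for the forcing number, reducing the statement to the minimax theorem on feedback arc sets in planar digraphs~\cite{Younger}. One inequality is immediate and already noted in the text: for any anti-forcing set $S\subseteq E(G)\setminus M$ and any compatible $M$-alternating set $\mathcal{A}$, each $C\in\mathcal{A}$ must meet $S$ in an edge of $C\setminus M$, since otherwise $C$ survives in $G-S$ and $M\triangle C$ is a second perfect matching of $G-S$; and as distinct cycles of $\mathcal{A}$ share no edge outside $M$, the chosen edges are distinct, so $|S|\geq|\mathcal{A}|$ and hence $af(G,M)\geq c'(M)$. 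It remains to establish the reverse inequality $af(G,M)\leq c'(M)$.

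For the reverse direction I would encode $M$-alternating cycles as directed cycles. Fix the bipartition $(U,W)$ of $G$ and orient every edge of $M$ from $U$ to $W$ and every edge of $E(G)\setminus M$ from $W$ to $U$; then at each vertex the unique $M$-edge and the non-$M$-edges receive opposite orientations, so a cycle of $G$ is $M$-alternating exactly when it becomes a directed cycle. Next contract every $M$-arc to a single vertex, obtaining a planar digraph $D$ whose arcs are precisely the non-$M$-edges of $G$ (contraction preserves planarity, and $G$ simple excludes loops). The two correspondences to verify are: (i) a set $S\subseteq E(G)\setminus M$ is an anti-forcing set of $M$ iff the corresponding arcs form a feedback arc set of $D$, which holds because $M$ is the unique perfect matching of $G-S$ iff $G-S$ has no $M$-alternating cycle iff $D$ with those arcs deleted is acyclic; and (ii) a family of $M$-alternating cycles is compatible iff the corresponding directed cycles are pairwise arc-disjoint in $D$, which holds because two $M$-alternating cycles through a common vertex necessarily share the $M$-edge incident there, so ``intersecting only at $M$-edges'' is exactly ``sharing no non-$M$-edge,'' i.e.\ arc-disjointness once the $M$-edges have been contracted to vertices.

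Granting (i) and (ii), $af(G,M)$ equals the minimum size of a feedback arc set of $D$ while $c'(M)$ equals the maximum number of arc-disjoint directed cycles of $D$, so the planar feedback-arc-set minimax theorem~\cite{Younger} gives $af(G,M)=c'(M)$. Beyond the routine easy direction, I expect the main obstacle to be making correspondence (ii) airtight: one must check that every collection of arc-disjoint directed cycles in the contracted digraph $D$ genuinely lifts to a \emph{compatible} $M$-alternating set of $G$ (and not merely to a family of closed walks), and that contracting all $M$-arcs creates no spurious directed cycle. Both points rest on the structural fact that each vertex of $G$ is incident with exactly one $M$-edge, which is precisely what controls how two $M$-alternating cycles may meet and guarantees that a simple directed cycle of $D$ expands back to a single $M$-alternating cycle of $G$.
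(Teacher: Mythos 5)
Your argument is correct, and it is essentially the same proof as in the cited source \cite{Zh2}: orient the bipartite graph so that $M$-alternating cycles become directed cycles, contract the $M$-edges to obtain a planar digraph in which anti-forcing sets correspond to feedback arc sets and compatible $M$-alternating sets correspond to families of arc-disjoint directed cycles, then invoke the Lucchesi--Younger minimax theorem \cite{Younger}, exactly mirroring Pachter--Kim's reduction for Theorem \ref{cycle}. The present paper states Theorem \ref{minimax2} as an imported result without proof, and your reconstruction, including the two correspondences and the observation that two $M$-alternating cycles meeting at a vertex must share the unique $M$-edge at that vertex, matches the argument of \cite{Zh2}.
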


Let $G$ be a plane bipartite graph with a perfect matching $M$. Given a compatible $M$-alternating set $\mathcal{A}$, two cycles $C_1$ and $C_2$
of $\mathcal{A}$ are \emph{crossing} if they share an edge $f$ in $M$
and the four edges adjacent to $f$ alternate in $C_1$ and $C_2$
(i.e., $C_1$ enters into $C_2$ from
one side and leaves for the other side via $f$).
A compatible $M$-alternating set $\mathcal{A}$ is called
\emph{non-crossing} if any two cycles of $\mathcal{A}$ are non-crossing.

\begin{lemma}\label{minimax9}\cite{Zh2,ZhangD1}{\bf .}
Let $G$ be a plane bipartite graph with a perfect matching $M$.
Then there is a non-crossing compatible $M$-alternating set $\mathcal{A}$
such that $|\mathcal{A}|=c^{\prime}(M)$.
\end{lemma}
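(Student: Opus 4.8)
The plan is to prove the lemma by an uncrossing argument of extremal type. One inequality is immediate: a non-crossing compatible $M$-alternating set is in particular a compatible $M$-alternating set, so such sets have size at most $c^{\prime}(M)$. Thus it suffices to start from a compatible $M$-alternating set realizing $c^{\prime}(M)$ and convert it, without changing its size, into a non-crossing one. Concretely, I would choose among all compatible $M$-alternating sets of maximum size $c^{\prime}(M)$ a set $\mathcal{A}$ whose total number of crossings---summed over all pairs of its cycles and all $M$-edges at which such a pair crosses---is minimum, and then show this minimum is $0$.

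Suppose instead that two cycles $C_1,C_2\in\mathcal{A}$ cross at an $M$-edge $e=uv$. I would first record the structural consequence of compatibility: since each vertex of an $M$-alternating cycle is covered by the $M$-edge of that cycle, any common vertex of $C_1$ and $C_2$ is the end of a common $M$-edge; hence $C_1$ and $C_2$ run along a shared set of $M$-edges (one of which is $e$) and are otherwise disjoint. Writing $C_i=e\cup Q_i$ with $Q_i$ a $u$--$v$ alternating path starting and ending with non-$M$ edges, the crossing hypothesis says exactly that the four non-$M$ edges incident with $e$ interleave around $e$ in the fixed plane embedding of $G$. I would then perform the planar switch at $e$: reconnect these four strands in the other, non-crossing, way along the shared $M$-edges to produce two $M$-alternating cycles $C_1',C_2'$ supported on $C_1\cup C_2$ and no longer crossing at $e$. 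The replacement $\mathcal{A}'=(\mathcal{A}\setminus\{C_1,C_2\})\cup\{C_1',C_2'\}$ then has the same cardinality $c^{\prime}(M)$.

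It remains to verify that $\mathcal{A}'$ is a compatible $M$-alternating set with strictly fewer crossings, contradicting the choice of $\mathcal{A}$. Compatibility with the untouched cycles is inherited for free: as $C_1'\cup C_2'\subseteq C_1\cup C_2$, any edge that some $C\in\mathcal{A}\setminus\{C_1,C_2\}$ shares with $C_1'$ or $C_2'$ was already shared with $C_1$ or $C_2$, hence lies in $M$. The decisive and hardest point is to show that the local switch at $e$ genuinely outputs two $M$-alternating cycles (rather than merging $C_1\cup C_2$ into a single cycle) and that it strictly lowers the global crossing count without creating new crossings---either between $C_1'$ and $C_2'$ at the remaining shared $M$-edges, or between the new cycles and any other member of $\mathcal{A}$. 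This is where planarity is essential: I expect to treat $C_1$ and $C_2$ as closed curves and use, via the Jordan curve theorem, that two such curves meet transversally an even number of times, so that the reconnection dictated by the $M$-alternating structure resolves the crossing at $e$, keeps the number of cycles equal to two, and leaves every other incidence unchanged. Establishing precisely this parity-and-planarity bookkeeping is the main obstacle; once it is in place, the strict decrease contradicts minimality, forcing the crossing number of $\mathcal{A}$ to be $0$ and hence $\mathcal{A}$ to be the desired non-crossing compatible $M$-alternating set of size $c^{\prime}(M)$.
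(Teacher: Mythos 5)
You should first be aware that the paper does not prove this lemma at all: it is quoted verbatim from \cite{Zh2,ZhangD1}, so your attempt can only be compared with the argument in those cited papers. Your overall framework does match theirs: the easy inequality, the choice of a maximum compatible $M$-alternating set with the fewest total crossings, the observation that a common vertex of two compatible cycles forces a common $M$-edge (so two such cycles meet exactly along shared $M$-edges), and the remark that compatibility with the untouched cycles is inherited because the new cycles live inside $C_1\cup C_2$ --- all of this is correct and is how the literature proof begins.

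The genuine gap is exactly where you put your ``main obstacle,'' and unfortunately the specific move you propose cannot be repaired as stated. A switch localized at the single crossing edge $e=uv$ is impossible: any $M$-alternating cycle through $u$ or $v$ must contain the matching edge $e$, so each of the two desired new cycles would again have to contain $e$; but at $e$ there are only two ways to recombine the four non-$M$ strands, namely the original pairing (returning $C_1,C_2$) and the swapped pairing, and if you trace the swapped pairing through $C_1\cup C_2$ keeping all other incidences intact you obtain a \emph{single} closed walk (either one omitting $e$, which then fails to be $M$-alternating at $u$ and $v$, or one traversing $e$ twice). This is the topological fact that smoothing one transversal crossing of two \emph{distinct} closed curves always merges them into one curve --- it can never preserve the number of curves. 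The correct move, as in \cite{Zh2,ZhangD1}, needs the parity you mention only in passing as its starting point, not as an afterthought: since $C_1,C_2$ cross an even number of times, there is a second crossing $M$-edge $e^{\prime}$, and one swaps the two segments of $C_1$ and $C_2$ lying between $e$ and $e^{\prime}$. To make even this work one must orient both cycles so that every shared $M$-edge is traversed from the same colour class to the other (this guarantees the four segments re-glue into two $M$-alternating closed walks each containing both $e$ and $e^{\prime}$), and one must choose $e,e^{\prime}$ so that the two segments being exchanged are internally disjoint --- otherwise some shared $M$-edge appears twice on one of the new closed walks and the result is not a simple cycle --- and finally check that crossings at $e$ and $e^{\prime}$ disappear while incidences with third cycles are locally unchanged. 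This selection-and-verification is the entire mathematical content of the lemma; your proposal defers it (``I expect to\dots'') rather than carries it out, and the one concrete mechanism it does offer, the single-edge switch, provably fails.
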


A triphenylene is a benzenoid consisting of
four hexagons, one hexagon at the center,
for the other three disjoint hexagons,
each of them has a common edge with the center one.
For example, the four hexagons $s_{1,1},s_{1,2},h_{1,2},h_{2,1}$
form a triphenylene, see Fig. \ref{py}.

\begin{lemma}\label{minimax3}{\bf .}
Let $M$ be a perfect matching of the pyrene system $H_n$.
Then there is a maximum non-crossing compatible $M$-alternating set $\mathcal{A}$ such that each member of $\mathcal{A}$ either is a hexagon or the periphery of a triphenylene.
\end{lemma}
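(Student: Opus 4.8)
The plan is to refine the set provided by Lemma~\ref{minimax9} through an exchange argument driven by Lemma~\ref{resonance}. By Lemma~\ref{minimax9} there is a non-crossing compatible $M$-alternating set of cardinality $c'(M)$, so the family of maximum non-crossing compatible $M$-alternating sets is nonempty. Among all such sets I would choose $\mathcal{A}$ so that, first, the number of members that are neither a hexagon nor the periphery of a triphenylene is as small as possible, and, subject to that, the total length $\sum_{C\in\mathcal{A}}|C|$ is as small as possible. The goal is then to show that this extremal $\mathcal{A}$ has no ``bad'' member at all, which is exactly the assertion of the lemma.

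So I would assume, for contradiction, that some $C\in\mathcal{A}$ is neither a hexagon nor a triphenylene periphery; in particular $C$ encloses at least two faces of $H_n$. By Lemma~\ref{resonance} there is an $M$-alternating hexagon $h$ lying in the interior of $C$. The aim is to produce a single $M$-alternating cycle $C^\ast$ contained in the closed interior of $C$ that is a hexagon or a triphenylene periphery and for which $\mathcal{A}^\ast=(\mathcal{A}\setminus\{C\})\cup\{C^\ast\}$ is again a non-crossing compatible set of cardinality $c'(M)$. Since $C^\ast$ is a hexagon or triphenylene periphery while $C$ is neither, this strictly decreases the first extremal quantity, contradicting the choice of $\mathcal{A}$ and finishing the proof.

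The substantive work is to build $C^\ast$ and to verify that it does not disturb the other members of $\mathcal{A}$. Here I would use the explicit geometry of the pyrene system $H_n$ to list, up to its symmetry, the $M$-alternating cycles that enclose at least two faces, and for each exhibit the reducing cycle inside it: in most configurations the interior $M$-alternating hexagon $h$ supplied by Lemma~\ref{resonance} already works and one takes $C^\ast=h$, while in the configurations where the four faces $s_{i,j},s_{i,j+1},h_{i,j+1},h_{i+1,j}$ of a triphenylene are entangled one is forced to take $C^\ast$ to be the corresponding triphenylene periphery. This is precisely the role played by the triphenylene in the statement: it is the one irreducible cycle enclosing more than one face of $H_n$.

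The main obstacle I expect is the interaction between this local replacement and the global non-crossing and compatibility conditions. Because $\mathcal{A}$ is compatible and non-crossing, every other member $C'$ meets $C$ only in edges of $M$ and does not cross $C$; I must check that the shrunken cycle $C^\ast\subseteq\mathrm{int}(C)$ still meets each such $C'$ only in edges of $M$ and never crosses it, and that no member of $\mathcal{A}$ properly nested inside $C$ is broken. The rigidity of $H_n$ is what makes this manageable: since its faces are hexagons fused in the fixed pyrene pattern, two compatible non-crossing cycles that share edges of $M$ can overlap in only a few local ways, and in each of them $C^\ast$ can be selected so that both conditions survive. Discharging this finite case analysis, with Lemma~\ref{resonance} always locating an interior $M$-alternating hexagon to serve as, or to sit inside, $C^\ast$, completes the argument.
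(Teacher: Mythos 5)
Your overall skeleton does match the paper's: both start from Lemma~\ref{minimax9}, make an extremal choice among maximum non-crossing compatible $M$-alternating sets (the paper minimizes $I(\mathcal{A})=\sum_{C\in\mathcal{A}}I(C)$, the number of hexagons enclosed by members of $\mathcal{A}$; you minimize the number of ``bad'' members and then total length), and then run an exchange argument fed by Lemma~\ref{resonance}. But your proposal stops exactly where the real proof has to begin. The entire content of the lemma is the claim that the exchange can always be performed and that the triphenylene periphery is the only irreducible non-hexagonal case, and you defer both to a ``finite case analysis'' that you never carry out and that is not obviously finite: an $M$-alternating cycle of $H_n$ can enclose arbitrarily many faces (it can wind around many pyrene fragments as $n$ grows), so there is no bounded list of cycle shapes ``up to symmetry'' to discharge. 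Moreover, whether the interior hexagon $h$ ``already works'' as $C^\ast$ depends not only on the local geometry of $C$ but on $M$ and on the rest of $\mathcal{A}$; the triphenylene configuration is precisely the case where the unique $M$-alternating hexagon inside $C$ is the central hexagon, which shares its three matched edges with the periphery and may already belong to $\mathcal{A}$ (this is exactly what happens in the set $\{h_{1,2},s_{2,1},s_{2,2},T\}$ used later in Theorem~\ref{main7}), so no hexagon substitute exists. Your case analysis would therefore have to range over configurations of $C$, of $M$, and of the neighboring members of $\mathcal{A}$ simultaneously, and nothing in the write-up indicates how to control that.

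The paper closes this gap with a structural argument that your plan has no counterpart for. First, the extremal choice forces $C$ to be compatible with \emph{every} $M$-alternating hexagon (an incompatible one could replace $C$ and strictly lower $I(\mathcal{A})$), and this compatibility rules out the boundary hexagons $s_{i,j}$ as the hexagon supplied by Lemma~\ref{resonance}, so $h=h_{i,j}$, say $h=h_{i,1}$ with $i\neq 1$. Second, setting $M'=M\triangle h_{i,1}$, the hexagons $s_{i,1},s_{i,2}$ become $M'$-alternating, and Claim~1 of the paper shows that $h_{i-1,2}$ is $M'$-alternating as well; its proof is a propagation down the chain (if it fails, then $h_{i-1,1}$ is $M$-alternating, then $h_{i-2,1}$, and so on, until $h_{1,1}$ is $M$-alternating yet incompatible with $C$, a contradiction). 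This propagation is precisely the device that tames arbitrarily long cycles $C$, replacing your unbounded enumeration. Finally, the minimality of $I(\mathcal{A})$ pins $C$ down to be the periphery of the triphenylene on $h_{i-1,2},h_{i,1},s_{i,1},s_{i,2}$. Without these steps, or a worked-out substitute for them, what you have is a plausible plan for a proof rather than a proof.
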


\begin{proof}
By Lemma \ref{minimax9},
there is a maximum non-crossing compatible $M$-alternating set $\mathcal{A}$
with $I(\mathcal{A})=\sum_{C\in \mathcal{A}}I(C)$ as small as possible,
where $I(C)$ denotes the number of hexagons in the interior of $C$.
Let $C$ be a member of $\mathcal{A}$.
Suppose $C$ is not a hexagon,
by Lemma \ref{resonance},
there is an $M$-alternating hexagon $h$ in the interior of $C$.
Note that $C$ and $h$ must be compatible,
otherwise $\mathcal{A}^\prime=(\mathcal{A}\backslash\{C\})\cup\{h\}$ can be a maximum non-crossing compatible $M$-alternating set such that $I(\mathcal{A}^\prime)<I(\mathcal{A})$,
a contradiction. In fact, $C$ has to be compatible with any
$M$-alternating hexagon, which implies that $h:=h_{i,j}$,
without loss of generality, let $h:=h_{i,1}(i\neq1)$ (see Fig. \ref{py}).
Then $e_{i,1}, f_{i,1}$ and the right vertical edge of $h_{i,1}$
all belong to $M$. Let $M^\prime=M\triangle h_{i,1}$.
Then $s_{i,1}$ and $s_{i,2}$ both are $M^\prime$-alternating hexagons.

\textbf{Claim 1.} $h_{i-1,2}$ also is $M^\prime$-alternating.
\begin{proof}
Suppose $h_{i-1,2}$ is not $M^\prime$-alternating.
Then at least one of $p_{i-1,2}$ and $q_{i-1,2}$
does not belong to $M$.
If only one of $p_{i-1,2}$ and $q_{i-1,2}$ belongs to $M$,
say $p_{i-1,2}\in M$, then $s_{i-1,2}$ is an $M$-alternating hexagon
which is not compatible with $C$, a contradiction.
Therefore both of $p_{i-1,2}$ and $q_{i-1,2}$ are not in $M$,
then $h_{i-1,1}$ is $M$-alternating.
If $p_{i-2,2}$ and $q_{i-2,2}$ both belong to $M$,
then the four hexagons $h_{i-2,2}$, $h_{i-1,1}$, $s_{i-1,1}$, and $s_{i-1,2}$
form a triphenylene whose periphery $T$ is an $M$-alternating cycle.
Note that $T$ is compatible with each cycle of $\mathcal{A}\setminus\{C\}$,
thus  $(\mathcal{A}\setminus\{C\})\cup\{T\}$ can be a maximum non-crossing
compatible $M$-alternating set with $I((\mathcal{A}\setminus\{C\})\cup\{T\})<I(\mathcal{A})$,
a contradiction. Hence at least one of $p_{i-2,2}$ and $q_{i-2,2}$ does
not belong to $M$, by similar  discussion as in the previous steps,
we can show that $h_{i-2,1}$ is $M$-alternating.
Keeping on the process, and finally we will prove that
$h_{1,1}$ is $M$-alternating,
however $h_{1,1}$ is not compatible with $C$, a contradiction.
\end{proof}

According to Claim 1 and the minimality of $I(\mathcal{A})$,
$C$ has to be the periphery of the triphenylene consisting of
the four hexagons $h_{i-1,2}$, $h_{i,1}$, $s_{i,1}$ and $s_{i,2}$
(see Fig. \ref{py}).
\end{proof}

\section{Forcing polynomial of pyrene system}

 The forcing polynomial of a graph
$G$ is defined as follow \cite{ZhangZhao}:

\begin{equation}\label{eq1}
F(G,x)=\sum_{M\in \mathcal{M}(G)} x^{f(G,M)}
=\sum_{i=f(G)}^{F(G)} w_ix^{i},
\end{equation}
where $\mathcal{M}(G)$ is the collection of all perfect matchings of $G$,
$w_i$ is the number of perfect matchings of $G$ with the forcing number $i$.

As a consequence,
let $\Phi(G)$ be the number of perfect matchings of a graph $G$,
then $\Phi(G)=F(G,1)$.
Recall that the degree of freedom of a graph $G$
is the sum over the forcing numbers of all perfect matchings of $G$,
denoted by $IDF(G)$,
then $IDF(G)=\frac{d}{dx}F(G,x)|_{x=1}$.
$\Phi(G)$ and $IDF(G)$ both are chemically meaningful indices within a resonance theoretic context \cite{Klein, Randic}.
Note that if $G$ is a null graph or a graph has a unique perfect matching,
then $F(G,x)=1$.

\begin{figure}[http]
\centering
\subfigure[$H_1$]{
    \label{pyrene2}
 \includegraphics[height=30mm]{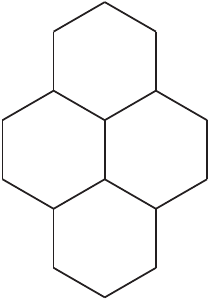}}
\hspace{30mm}\subfigure[$L$]{
    \label{phenanthrene}
 \includegraphics[height=30mm]{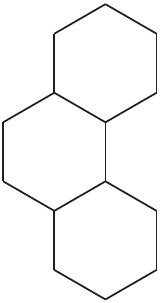}}
 \hspace{30mm}\subfigure[$N$]{
    \label{diphenyl}
 \includegraphics[height=30mm]{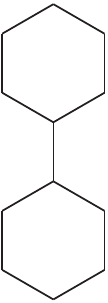}}
 \caption{Pyrene (a), Phenanthrene (b) and Diphenyl (c)}
\label{dpp}
\end{figure}

In the following we want to
derive a recurrence formula for forcing polynomial of a pyrene system,
as preparations the forcing polynomials of pyrene,  phenanthrene and diphenyl
are computed: $F(H_1,x)=4x^2+2x$, $F(L,x)=4x^2+x$, $F(N,x)=4x^2$ (see Fig. \ref{dpp}).

\begin{theorem}\label{main3}{\bf .}
Let $H_n$ be a pyrene system with $n$ pyrene fragments.
Then
\begin{equation}\label{eq99}
F(H_n,x)=(4x^2+2x)F(H_{n-1},x)-x^2F(H_{n-2},x),
\end{equation}
where $n\geq2$, $F(H_0,x)=1$ and $F(H_1,x)=4x^2+2x$.
\end{theorem}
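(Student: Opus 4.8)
The plan is to reduce the forcing number to a count of disjoint $M$-alternating hexagons via Lemma \ref{feh}, and then to classify the perfect matchings of $H_n$ according to the behaviour of $M$ on the last pyrene fragment together with the edges joining it to $H_{n-1}$. Since $f(H_n,M)=h(M)$, the forcing polynomial becomes a generating function for matchings weighted by the size of a maximum family of disjoint $M$-alternating hexagons, and the whole computation can be organized around how such a maximum family splits at the junction between the $n$-th fragment and the rest.

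First I would fix the junction edges $e_{n,1},f_{n,1}$ together with the associated vertical edges and the edges $p_{n-1,2},q_{n-1,2}$ of Fig. \ref{py}, and case-split every perfect matching $M$ of $H_n$ according to which of these lie in $M$. In each case the restriction of $M$ to the last fragment is one of finitely many local patterns, and deleting the last fragment produces a perfect matching of $H_{n-1}$ (or, in the spanning case where the matching is forced across two successive junctions, of $H_{n-2}$). To control the exponents I would prove the key additivity statement: a maximum set of disjoint $M$-alternating hexagons may be chosen so that it is the disjoint union of those lying inside the last fragment and those lying in the complementary sub-system, so that $f(H_n,M)$ equals the forcing number of the induced smaller matching plus the number of alternating hexagons contributed locally by the last fragment. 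This is where the polynomial $F(H_1,x)=4x^2+2x$ enters: it is precisely the distribution of the local contribution of one freely matched pyrene.

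Assembling the count, the matchings whose last fragment decouples from $H_{n-1}$ contribute the product $(4x^2+2x)F(H_{n-1},x)$. This product overcounts, however, because certain pairings of a last-fragment pattern with an $H_{n-1}$-matching either fail to agree on the shared edges or are realized only by a matching that is forced rigidly across the preceding junction as well; these exceptional matchings carry two extra forced hexagons on the last two fragments and, after deletion of both, correspond bijectively to perfect matchings of $H_{n-2}$. This accounts for the correction $-x^2F(H_{n-2},x)$, and collecting powers of $x$ over all cases yields (\ref{eq99}); the base values $F(H_0,x)=1$ and $F(H_1,x)=4x^2+2x$ are immediate. An equivalent and perhaps cleaner route is to introduce auxiliary polynomials recording the forcing-weighted matchings with each prescribed junction state, derive a first-order transfer relation between consecutive fragments, and eliminate the auxiliaries; the resulting second-order recurrence then has $4x^2+2x$ and $-x^2$ as its coefficients by construction.

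The main obstacle is the additivity step: proving that no optimal resonant family is forced to use a hexagon straddling the junction in a way that couples the two sides, and that the only genuine coupling is the single spanning configuration captured by $x^2F(H_{n-2},x)$. Establishing this demands a careful local analysis of the $M$-alternating hexagons around $e_{n,1},f_{n,1},p_{n-1,2},q_{n-1,2}$, of the same flavour as the argument in the proof of Lemma \ref{minimax3}, ruling out any additional interaction and verifying that each exceptional matching is counted with the correct exponent so that the inclusion-exclusion closes exactly.
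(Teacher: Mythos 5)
Your plan is viable, but it is genuinely different from the paper's proof, and your closing remark about auxiliary polynomials is in fact the paper's route rather than an equivalent reformulation of yours. The paper never performs an inclusion--exclusion: it introduces the auxiliary graph $G_n$ (the pyrene system with its leftmost hexagon deleted), partitions $\mathcal{M}(H_n)$ according to whether the two slant edges flanking a junction both lie in $M$ or both do not, and obtains two coupled first-order recurrences with only nonnegative terms, namely $F(H_n,x)=xF(G_{n-1},x)+(4x^2+x)F(H_{n-1},x)$ and $F(G_n,x)=xF(G_{n-1},x)+4x^2F(H_{n-1},x)$ (see (\ref{eq11}) and (\ref{eq22}); the factors $4x^2+x$ and $4x^2$ are the forcing polynomials of phenanthrene and diphenyl). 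The coefficient $4x^2+2x$ and the negative term $-x^2F(H_{n-2},x)$ in (\ref{eq99}) then appear purely by algebraic elimination of $F(G_\cdot,x)$; no overcounting argument is ever made. Your primary route instead reads $(4x^2+2x)F(H_{n-1},x)$ as $F(H_1,x)F(H_{n-1},x)$, a sum over pairs, and the correction term as the weight of the ``bad'' pairs. This can be made rigorous: perfect matchings of $H_n$ inject into $\mathcal{M}(H_1)\times\mathcal{M}(H_{n-1})$ once the shared vertical edge $d$ is re-inserted on whichever side does not use it; the pairs missed are exactly those in which both coordinates avoid $d$; and those pairs carry weight $x^{2}\cdot x^{f(H_{n-2},\cdot)}$, because the $d$-avoiding matching of a single pyrene is unique with forcing number $1$, while the $d$-avoiding matchings of $H_{n-1}$ arise from matchings of $H_{n-2}$ by a forced pattern contributing exactly one further alternating hexagon. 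The trade-off: the paper's route makes every step a clean bijection with additive forcing numbers (only Lemma \ref{feh} is needed), at the cost of carrying the auxiliary family $G_n$; your route stays inside the family $H_k$ but must justify a weighted inclusion--exclusion whose exponent bookkeeping is exactly the delicate part.

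Two of your statements need repair before your argument closes. First, ``deleting the last fragment produces a perfect matching of $H_{n-1}$'' is false in one of the three junction states: when the last fragment is internally matched and $d\notin M$, what remains is a perfect matching of $H_{n-1}$ minus the two shared vertices --- precisely the paper's auxiliary graph --- and only after adding $d$ back does one obtain a matching of $H_{n-1}$; this completion is what makes your pairing well defined, so it must be built into the correspondence (your parenthetical reduction to $H_{n-2}$ describes the subtracted configurations, not any actual matching of $H_n$). Second, your additivity statement cannot be phrased as ``hexagons inside the last fragment versus hexagons in the complementary sub-system,'' because the two hexagons containing the shared junction vertices, $h_{n,1}$ and $h_{n-1,2}$, intersect each other and each can be $M$-alternating (via the configuration in which $d$ and the two slant edges on the far side of the hexagon lie in $M$); one must show that whenever a maximum disjoint family uses such a straddling hexagon, it can be exchanged for the adjacent $s$-hexagons on its own side, which are then alternating as well. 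This swap argument does go through, and it is the local analysis you correctly flag as the main obstacle; so the deficiency is one of execution rather than conception, but as written these two points are gaps your proof must fill.
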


\begin{proof}
First we introduce an auxiliary graph $G_n$ obtained by
deleting the leftmost hexagon $h_{1,1}$ from $H_n$, see Fig. \ref{py1}.
We divide $\mathcal{M}(H_n)$ in two subsets:
$\mathcal{M}_{f_{1,2}}^{e_{1,2}}(H_n)=\{M\in\mathcal{M}(H_n)\mid e_{1,2},f_{1,2}\in M\}$, $\mathcal{M}_{\bar{f}_{1,2}}^{\bar{e}_{1,2}}(H_n)=\{M\in\mathcal{M}(H_n)\mid e_{1,2},f_{1,2}\not\in M\}$.
If $M\in \mathcal{M}_{f_{1,2}}^{e_{1,2}}(H_n)$,
then $h_{1,2}$ is a unique $M$-alternating hexagon in
the leftmost pyrene fragment,
and $M^{\prime}=M\cap E(G_{n-1})$ is a perfect matching of the graph $G_{n-1}$ obtained by deleting vertices of the leftmost pyrene fragment and their incident edges from $H_n$.
By Lemma \ref{feh}, $f(H_n,M)=f(G_{n-1},M^{\prime})+1$.
If $M\in\mathcal{M}_{\bar{f}_{1,2}}^{\bar{e}_{1,2}}(H_n)$,
then the restriction $M_1$ of $M$ on the phenanthrene $L$ consisting of three hexagons $s_{1,1},h_{1,1},s_{1,2}$ is a perfect matching of $L$,
and $M_2=M\cap E(H_{n-1})$ is a perfect matching of the subsystem $H_{n-1}$
obtained by deleting vertices of $L$ and their incident edges from $H_n$,
see Fig. \ref{py1}.
According to Lemma \ref{feh}, $f(H_n,M)=f(L,M_1)+f(H_{n-1},M_2)$.
By Eq. $(\ref{eq1})$, we have

\begin{align}\label{eq11}
\nonumber F(H_n,x) & =\sum_{M\in \mathcal{M}(H_n)} x^{f(H_n,M)}\\
\nonumber & =\sum_{M\in \mathcal{M}_{f_{1,2}}^{e_{1,2}}(H_n)} x^{f(H_n,M)}+
\sum_{M\in \mathcal{M}_{\bar{f}_{1,2}}^{\bar{e}_{1,2}}(H_n)} x^{f(H_n,M)}\\
\nonumber & =\sum_{M^{\prime}\in \mathcal{M}(G_{n-1})} x^{f(G_{n-1},M^{\prime})+1}
+\sum_{M_{1}\in\mathcal{M}(L),M_{2}\in\mathcal{M}(H_{n-1})} x^{f(L,M_{1})+f(H_{n-1},M_2)}\\
\nonumber & =x\sum_{M^{\prime}\in \mathcal{M}(G_{n-1})} x^{f(G_{n-1},M^{\prime})}
+\sum_{M_{1}\in\mathcal{M}(L),M_{2}\in\mathcal{M}(H_{n-1})} x^{f(L,M_{1})}x^{f(H_{n-1},M_2)}\\
\nonumber & =xF(G_{n-1},x)+(\sum_{M_{1}\in\mathcal{M}(L)}x^{f(L,M_{1})})
(\sum_{M_{2}\in\mathcal{M}(H_{n-1})}x^{f(H_{n-1},M_2)})\\
\nonumber& =xF(G_{n-1},x)+F(L,x)F(H_{n-1},x)\\
& =xF(G_{n-1},x)+(4x^2+x)F(H_{n-1},x).
\end{align}

Now we deduce a recurrence  relation for forcing polynomial of
the auxiliary graph $G_n$.
We can divide $\mathcal{M}(G_{n})$ in two types,
one is perfect matchings which containing edges $e_{1,2}$ and $f_{1,2}$,
and another is on the converse.
For a perfect matching $M\in \mathcal{M}(G_{n})$,
if $e_{1,2},f_{1,2}\in M$,
then $h_{1,2}$ is a unique $M$-alternating hexagon in the leftmost
phenanthrene consisting of three hexagons $s_{1,1},s_{1,2},h_{1,2}$, and the restriction $M^\prime$ of $M$ on the graph $G_{n-1}$ obtained by deleting
vertices of the leftmost phenanthrene and their incident edges from $G_n$
is a perfect matching of $G_{n-1}$.
By Lemma \ref{feh}, $f(G_{n},M)=f(G_{n-1},M^\prime)+1$.
On the other hand, if $e_{1,2},f_{1,2}\not\in M$,
then the restriction $M_1$ of $M$ on the leftmost diphenyl $N$
is a perfect matching of $N$,
and the restriction $M_2$ of $M$ on the successive subsystem $H_{n-1}$
is a perfect matching of $H_{n-1}$.
Therefore $f(G_{n},M)=f(N,M_1)+f(H_{n-1},M_2)$, see Fig. \ref{py1}.
By a similar deducing as Eq. $(\ref{eq11})$,
we can obtain the following formula

\begin{equation}\label{eq22}
F(G_n,x)=xF(G_{n-1},x)+4x^2F(H_{n-1}).
\end{equation}
Eq. $(\ref{eq11})$ minus Eq. $(\ref{eq22})$,
we have
\begin{equation}
\nonumber F(G_n,x)=F(H_n,x)-xF(H_{n-1},x),
\end{equation}
which implies
\begin{equation}
\nonumber F(G_{n-1},x)=F(H_{n-1},x)-xF(H_{n-2},x).
\end{equation}
Substituting this expression into Eq. $(\ref{eq11})$,
we can obtain Eq. (\ref{eq99}),
the proof is completed.
\end{proof}

\begin{theorem}\label{main4}{\bf .}
Let $H_n$ be a pyrene system with $n$ pyrene fragments.
Then
\begin{equation}
\nonumber F(H_n,x)=x^{n}\sum_{j=0}^{n}\sum_{i=\lceil\frac{j+n}{2}\rceil}^{n}
(-1)^{n-i}2^{2i+j-n}{i\choose n-i}{2i-n\choose j}x^{j}.
\end{equation}
\end{theorem}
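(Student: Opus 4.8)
The plan is to solve the recurrence of Theorem~\ref{main3} in closed form by generating functions, regarding $x$ as a parameter and working in the ring of formal power series in $y$ over $\mathbb{Z}[x]$. I would set
\[
\mathcal{F}(y)=\sum_{n\ge0}F(H_n,x)\,y^n,
\]
multiply the relation $F(H_n,x)=(4x^2+2x)F(H_{n-1},x)-x^2F(H_{n-2},x)$ by $y^n$, and sum over $n\ge2$. Using $F(H_0,x)=1$ and $F(H_1,x)=4x^2+2x$, the low-order boundary terms cancel exactly because $F(H_1,x)=(4x^2+2x)F(H_0,x)$, so the numerator collapses to $1$ and one obtains the clean rational form
\[
\mathcal{F}(y)=\frac{1}{1-(4x^2+2x)y+x^2y^2}.
\]

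Next I would extract the coefficient of $y^n$. Writing the denominator as $1-u$ with $u=(4x^2+2x)y-x^2y^2$ (which has no constant term in $y$), the geometric expansion $\mathcal{F}(y)=\sum_{k\ge0}u^k$ together with the binomial theorem applied to $u^k=[(4x^2+2x)y-x^2y^2]^k$ yields, after collecting the terms of total $y$-degree $n$,
\[
F(H_n,x)=\sum_{m=0}^{\lfloor n/2\rfloor}(-1)^m\binom{n-m}{m}x^{2m}(4x^2+2x)^{n-2m}.
\]

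Finally I would convert this single sum into the stated double sum. Factoring $4x^2+2x=2x(2x+1)$ and expanding $(2x+1)^{n-2m}=\sum_{j}\binom{n-2m}{j}(2x)^j$ gives $(4x^2+2x)^{n-2m}=2^{n-2m}x^{n-2m}\sum_{j=0}^{n-2m}\binom{n-2m}{j}2^jx^j$, and combining the powers of $x$ produces a global factor $x^{2m}\cdot x^{n-2m}=x^n$. Substituting $i=n-m$ (so $m$ running from $0$ to $\lfloor n/2\rfloor$ corresponds to $i$ running from $n$ down to $\lceil n/2\rceil$) turns the coefficients into $(-1)^{n-i}$, $2^{2i+j-n}$, $\binom{i}{n-i}$ and $\binom{2i-n}{j}$, matching the summand in the theorem. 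Since $\binom{2i-n}{j}=0$ for $j>2i-n$, the inner range on $j$ may be extended to $0\le j\le n$, after which interchanging the order of summation replaces the constraint $0\le j\le 2i-n$ by the lower bound $i\ge\lceil(n+j)/2\rceil$, giving exactly the claimed expression. The routine parts are the two binomial expansions; the only point demanding care is the bookkeeping in this last interchange, namely checking that $\lceil(n+j)/2\rceil$ is the correct lower limit for $i$ and that extending the $j$-range introduces no spurious terms. I expect this index manipulation to be the main, though modest, obstacle.
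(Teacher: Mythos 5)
Your proposal is correct and follows essentially the same route as the paper: form the generating function from the recurrence of Theorem \ref{main3}, obtain the rational form $1/\bigl(1-(4x^2+2x)y+x^2y^2\bigr)$, expand geometrically and binomially, and interchange the order of summation to reach the stated double sum. The only difference is cosmetic bookkeeping — you first write the coefficient as $\sum_{m}(-1)^m\binom{n-m}{m}x^{2m}(4x^2+2x)^{n-2m}$ and then substitute $i=n-m$, whereas the paper factors $xz$ out of the kernel at the start and lands on the index $i$ directly — and your index-range checks (the lower limit $\lceil(n+j)/2\rceil$ and the vanishing of $\binom{2i-n}{j}$ for $j>2i-n$) are exactly the ones the paper relies on.
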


\begin{proof}
For convenience, let $F_n:=F(H_n,x)$, then the generating function
of sequence $\{F_n\}_{n=0}^{\infty}$ is obtained as follow
\begin{align}\label{eq44}
\nonumber G(z)&=\sum_{n=0}^{\infty}F_{n}z^{n}=1+(4x^2+2x)z+\sum_{n=2}^{\infty}F_{n}z^{n}\\
\nonumber &=1+(4x^2+2x)z+\sum_{n=2}^{\infty}((4x^2+2x)F_{n-1}-x^2F_{n-2})z^{n}\\
\nonumber &=1+(4x^2+2x)z+(4x^2+2x)z(G(z)-1)-x^{2}z^{2}G(z)\\
\nonumber &=1+(4x^2+2x)zG(z)-x^{2}z^{2}G(z).
\end{align}

Therefore
\begin{eqnarray*}\label{eq55}
G(z)&=&\FS{1}{1-((4x^2+2x)z-x^2z^2)}\\
&=&\sum_{i=0}^{\infty}((4x^{2}+2x)z-x^{2}z^{2})^{i}\\
&=&\sum_{i=0}^{\infty}x^{i}z^{i}\sum_{j=0}^{i}{i \choose j}(4x+2)^{i-j}(-xz)^{j}\\
&=&\sum_{n=0}^{\infty}\sum_{i=\lceil\frac{n}{2}\rceil}^{n}(-1)^{n-i}
{i \choose n-i}(4x+2)^{2i-n}x^{n}z^{n},
\end{eqnarray*}
which implies
\begin{eqnarray*}\label{eq66}
F_n&=&x^{n}\sum_{i=\lceil\frac{n}{2}\rceil}^{n}(-1)^{n-i}{i\choose n-i}(4x+2)^{2i-n}\\
&=&x^{n}\sum_{i=\lceil\frac{n}{2}\rceil}^{n}(-1)^{n-i}{i\choose n-i}
\sum_{j=0}^{2i-n}2^{2i+j-n}{2i-n\choose j}x^{j}\\
&=&x^{n}\sum_{j=0}^{n}\sum_{i=\lceil\frac{j+n}{2}\rceil}^{n}
(-1)^{n-i}2^{2i+j-n}{i\choose n-i}{2i-n\choose j}x^{j}.
\end{eqnarray*}
The proof is completed.
\end{proof}

As a consequence,
the following corollary is immediate.

\begin{corollary}{\bf .}
Let $H_n$ be a pyrene system with $n$ pyrene fragments. Then
\begin{enumerate}
  \item $f(H_n)=n$;
  \item$F(H_n)=2n$;
  \item Spec$_f(H_n)=[n,2n]$.
\end{enumerate}
\end{corollary}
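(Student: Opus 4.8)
The plan is to read the three claims directly off the forcing polynomial, since by Eq.~(\ref{eq1}) we have $F(H_n,x)=\sum_i w_i x^{i}$ with $w_i$ the number of perfect matchings of forcing number $i$; thus $f(H_n)$ is the least exponent carrying a nonzero coefficient, $F(H_n)$ the greatest such exponent, and Spec$_f(H_n)$ the set of all exponents $i$ with $w_i\neq 0$. So the entire corollary reduces to locating the lowest and highest nonzero coefficients of $F(H_n,x)$ and proving that no coefficient strictly between them vanishes. Writing $F_n=\sum_i w_i^{(n)}x^{i}$, the recurrence (\ref{eq99}) becomes the coefficient relation $w_i^{(n)}=4w_{i-2}^{(n-1)}+2w_{i-1}^{(n-1)}-w_{i-2}^{(n-2)}$, which I will use throughout.

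First I would pin down the two boundary coefficients by induction on (\ref{eq99}), starting from $F_0=1$ and $F_1=4x^2+2x$. For the top: the term $-x^2F_{n-2}$ has degree $2n-2$, so it cannot reach the leading monomial of $(4x^2+2x)F_{n-1}$, giving $\deg F_n=2n$ and $w_{2n}^{(n)}=4\cdot 4^{n-1}=4^{n}$. For the bottom: $F_{n-1}$ has lowest term of degree $n-1$ and $F_{n-2}$ of degree $n-2$, so each of $2xF_{n-1}$ and $x^2F_{n-2}$ contributes to $x^{n}$ while $4x^2F_{n-1}$ starts at $x^{n+1}$; hence $F_n$ is divisible by $x^{n}$ and $w_n^{(n)}=2w_{n-1}^{(n-1)}-w_{n-2}^{(n-2)}=2n-(n-1)=n+1$, using the inductive values $w_{n-1}^{(n-1)}=n$ and $w_{n-2}^{(n-2)}=n-1$. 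Since $4^{n}>0$ and $n+1>0$, this yields parts (1) and (2), namely $f(H_n)=n$ and $F(H_n)=2n$, and shows the support of $F_n$ lies in $[n,2n]$.

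For part (3) I must rule out gaps, i.e. show $w_i^{(n)}>0$ for every $n\le i\le 2n$. The coefficients are nonnegative for free, since each counts perfect matchings. The key device is the monotonicity inequality $w_i^{(n)}\ge w_{i-1}^{(n-1)}$ for all $i$, which I would prove by induction: substituting the recurrence gives $w_i^{(n)}=4w_{i-2}^{(n-1)}+2w_{i-1}^{(n-1)}-w_{i-2}^{(n-2)}$, and the inductive form $w_{i-1}^{(n-1)}\ge w_{i-2}^{(n-2)}$ together with $4w_{i-2}^{(n-1)}\ge 0$ gives $w_i^{(n)}\ge w_{i-1}^{(n-1)}$. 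Once this holds, the same recurrence yields $2w_{i-1}^{(n-1)}-w_{i-2}^{(n-2)}\ge w_{i-2}^{(n-2)}\ge 0$, so $w_i^{(n)}\ge 4w_{i-2}^{(n-1)}$. For interior exponents $n+1\le i\le 2n-1$ the index $i-2$ lies in $[n-1,2n-3]$, which is contained in the support $[n-1,2n-2]$ of $F_{n-1}$ established above, so $w_{i-2}^{(n-1)}>0$ and therefore $w_i^{(n)}>0$. Combined with $w_n^{(n)}=n+1>0$ and $w_{2n}^{(n)}=4^{n}>0$, every coefficient on $[n,2n]$ is strictly positive, giving Spec$_f(H_n)=[n,2n]$.

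The boundary-term induction is routine; the genuine obstacle is part (3), because the recurrence (\ref{eq99}) subtracts $x^2F_{n-2}$ and so does not visibly preserve coefficient positivity. The monotonicity inequality $w_i^{(n)}\ge w_{i-1}^{(n-1)}$ is precisely what tames this subtracted term, and finding the right auxiliary invariant is the crux of the argument. One could instead try to extract positivity from the closed form in Theorem~\ref{main4}, but the alternating sum $\sum_i(-1)^{n-i}2^{2i+j-n}\binom{i}{n-i}\binom{2i-n}{j}$ hides the sign of each coefficient, so I expect the recurrence-based induction to be the cleaner and more reliable route.
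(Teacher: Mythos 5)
Your proposal is correct, and it takes a genuinely different route from the paper. The paper obtains this corollary by declaring it ``immediate'' from the closed form in Theorem~\ref{main4}: the exponents visible there run from $x^n$ to $x^{2n}$, and the spectrum is simply read off. But that closed form is an \emph{alternating} sum, so the fact that every coefficient between $x^n$ and $x^{2n}$ is strictly positive---which is precisely what part (3) requires---is not actually visible from it; the paper supplies no argument for this. Your proposal instead works at the level of the recurrence (\ref{eq99}): you pin down the boundary coefficients $w_{2n}^{(n)}=4^n$ and $w_n^{(n)}=n+1$ by induction, and then introduce the monotonicity invariant $w_i^{(n)}\ge w_{i-1}^{(n-1)}$, which is exactly what is needed to neutralize the subtracted term $-x^2F(H_{n-2},x)$ and yields $w_i^{(n)}\ge 4w_{i-2}^{(n-1)}>0$ for all interior exponents. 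This gives a complete, self-contained proof of all three claims and in effect fills the gap the paper papers over with the word ``immediate''; your closing remark about the sign-cancellation in the closed form is exactly the right diagnosis. Two minor points. First, you omit the (trivial) base case $n=1$ of the monotonicity induction, $w_1^{(1)}=2\ge 1=w_0^{(0)}$; it should be stated. Second, your key invariant also follows with no induction at all from the paper's intermediate identity (\ref{eq11}), namely $F(H_n,x)=xF(G_{n-1},x)+(4x^2+x)F(H_{n-1},x)$: since $F(G_{n-1},x)$ is itself a counting polynomial with nonnegative coefficients, comparing coefficients gives $w_i^{(n)}\ge w_{i-1}^{(n-1)}+4w_{i-2}^{(n-1)}$ directly, which is both of your key inequalities at once. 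Your version has the advantage of using only the final recurrence (\ref{eq99}) as a black box, at the cost of one extra induction.
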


In the following we compute the degree of freedom of $H_n$,
and discuss its asymptotic behavior.
He and He \cite{He1} gave the following formula:
\begin{equation}\label{eq77}
\Phi(H_n)=6\Phi(H_{n-1})-\Phi(H_{n-2}),
\end{equation}
further we can obtain an general formula as follow:
\begin{equation}\label{eq78}
\Phi(H_n)=\frac{17-12\sqrt{2}}{16-12\sqrt{2}}(3-2\sqrt{2})^n+
\frac{17+12\sqrt{2}}{16+12\sqrt{2}}(3+2\sqrt{2})^n.
\end{equation}

\begin{theorem}\label{main5}{\bf .}
\begin{eqnarray}\label{eq79}
\nonumber IDF(H_n)&=&\FS{\sqrt{2}}{32}(3-2\sqrt{2})^n+\frac{7-5\sqrt{2}}{8}n(3-2\sqrt{2})^n
-\FS{\sqrt{2}}{32}(3+2\sqrt{2})^n\\
&~&+\frac{7+5\sqrt{2}}{8}n(3+2\sqrt{2})^n.
\end{eqnarray}
\end{theorem}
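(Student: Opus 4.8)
The plan is to exploit the identity $IDF(H_n)=\frac{d}{dx}F(H_n,x)\big|_{x=1}$ together with the bivariate generating function already produced in the proof of Theorem \ref{main4}. Writing $F_n:=F(H_n,x)$ and $G(z)=\sum_{n\ge0}F_nz^n=\frac{1}{1-(4x^2+2x)z+x^2z^2}$, I note that $G(z)$ is a formal power series in $z$ whose coefficients are polynomials in $x$, so $\partial_x$ acts coefficientwise and commutes with the sum in $z$. Hence the ordinary generating function of the sequence $\{IDF(H_n)\}$ is obtained simply by differentiating $G(z)$ with respect to $x$ and then setting $x=1$, which reduces the whole problem to one differentiation and one coefficient extraction.

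Carrying out the quotient-rule differentiation gives $\partial_x G(z)=\frac{(8x+2)z-2xz^2}{[1-(4x^2+2x)z+x^2z^2]^2}$, and evaluating at $x=1$ yields $D(z):=\sum_{n\ge0}IDF(H_n)z^n=\frac{10z-2z^2}{(1-6z+z^2)^2}$. The key simplification is the factorization $1-6z+z^2=(1-(3+2\sqrt2)z)(1-(3-2\sqrt2)z)$, valid because $3+2\sqrt2$ and $3-2\sqrt2$ sum to $6$ and multiply to $1$; these are precisely the reciprocals of the characteristic roots governing the recurrence (\ref{eq99}), and the squared denominator is what forces the $n(3\pm2\sqrt2)^n$ growth seen in the target formula.

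Next I would decompose $D(z)$ into partial fractions $\frac{A}{1-\alpha z}+\frac{B}{(1-\alpha z)^2}+\frac{C}{1-\beta z}+\frac{E}{(1-\beta z)^2}$ with $\alpha=3+2\sqrt2$, $\beta=3-2\sqrt2$, and then read off coefficients using $[z^n]\frac{1}{1-\gamma z}=\gamma^n$ and $[z^n]\frac{1}{(1-\gamma z)^2}=(n+1)\gamma^n$, where $[z^n]$ denotes coefficient extraction. Collecting the pure exponential terms and the $n$-weighted terms separately then reproduces exactly the claimed closed form (\ref{eq79}). The only genuinely laborious step, and the main obstacle, is pinning down the four constants $A,B,C,E$, whose values carry the surd $\sqrt2$ and where sign errors are easy to commit.

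To guard against such errors I would check the smallest case: since $F(H_1,x)=4x^2+2x$ gives $IDF(H_1)=(8x+2)\big|_{x=1}=10$, substituting $n=1$ into (\ref{eq79}) must return $10$ (indeed the pure exponential terms contribute $-\tfrac14$ and the $n$-weighted terms contribute $\tfrac{41}{4}$, summing to $10$). An equivalent recurrence-based route is also available as a cross-check: differentiating (\ref{eq99}) at $x=1$ gives
\begin{equation}
\nonumber IDF(H_n)=6\,IDF(H_{n-1})-IDF(H_{n-2})+10\,\Phi(H_{n-1})-2\,\Phi(H_{n-2}),
\end{equation}
an inhomogeneous linear recurrence whose homogeneous part coincides with that of $\Phi(H_n)$ in (\ref{eq77}); because the forcing term is itself a solution of the homogeneous equation, resonance again injects the $n(3\pm2\sqrt2)^n$ contributions, and solving this recurrence with (\ref{eq78}) as input recovers (\ref{eq79}).
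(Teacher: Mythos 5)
Your proposal is correct, but it takes a genuinely different route from the paper's. The paper never differentiates the generating function: writing $IDF_n$ for $IDF(H_n)$ and $\Phi_n$ for $\Phi(H_n)$, it differentiates the recurrence (\ref{eq99}) at $x=1$ to get exactly your cross-check recurrence $IDF_n=6IDF_{n-1}-IDF_{n-2}+10\Phi_{n-1}-2\Phi_{n-2}$, then uses (\ref{eq77}) to eliminate the $\Phi$-terms and obtain the homogeneous fourth-order recurrence (\ref{eq88}), $IDF_{n+2}=12IDF_{n+1}-38IDF_{n}+12IDF_{n-1}-IDF_{n-2}$, whose characteristic polynomial $x^4-12x^3+38x^2-12x+1$ has the double roots $3\pm2\sqrt{2}$; the four constants in the ansatz $\lambda_1(3-2\sqrt2)^n+\lambda_2 n(3-2\sqrt2)^n+\lambda_3(3+2\sqrt2)^n+\lambda_4 n(3+2\sqrt2)^n$ are then fitted to the computed values $IDF_3=1036$, $IDF_4=8068$, $IDF_5=58854$, $IDF_6=411978$, and the cases $n=0,1,2$ are checked separately, since the fitted solution is a priori valid only for $n\geq3$. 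Your route --- differentiate $G(z)$ coefficientwise, set $x=1$ to get $\sum_{n\geq0}IDF_n z^n=(10z-2z^2)/(1-6z+z^2)^2$, and expand in partial fractions --- is sound: differentiation does commute with the formal sum in $z$, the double poles at $z=1/(3\pm2\sqrt2)$ are exactly what force the $n(3\pm2\sqrt2)^n$ terms, and the decomposition does close on the claimed constants (with $\alpha=3+2\sqrt2$, $\beta=3-2\sqrt2$ one finds $B=\frac{7+5\sqrt{2}}{8}$ and $E=\frac{7-5\sqrt{2}}{8}$ for the squared factors, $A=-\frac{28+21\sqrt{2}}{32}$ and $C=\frac{21\sqrt{2}-28}{32}$ for the simple ones, whence $A+B=-\frac{\sqrt{2}}{32}$ and $C+E=\frac{\sqrt{2}}{32}$, matching (\ref{eq79})). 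What your approach buys: no large initial values to compute, no guessed solution form, validity for all $n\geq0$ in one stroke, and a structural explanation of the resonance via the squared denominator; what the paper's buys is elementary bookkeeping with nothing beyond standard linear-recurrence solving. The one point you must still supply for a complete proof is the actual computation of $A,B,C,E$: they are the entire content of the closed form, and your single check at $n=1$ is consistent with, but cannot determine, four unknowns. That algebra is routine (it helps to note $\alpha\beta=1$ and $(16+12\sqrt{2})^2=32\,(17+12\sqrt{2})$), so this is unfinished computation rather than a gap in the argument.
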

\begin{proof}
According to Eq. $(\ref{eq99})$,
\begin{eqnarray*}
\FS{d}{dx}F(H_n,x)&=&(8x+2)F(H_{n-1},x)+(4x^2+2x)\FS{d}{dx}F(H_{n-1},x)\\
&~&-2xF(H_{n-2},x)-x^2\frac{d}{dx}F(H_{n-2},x).
\end{eqnarray*}

For convenience, let $\Phi_n:=\Phi(H_n)$ and $IDF_n:=IDF(H_n)$,
then we have
\begin{eqnarray*}\label{eq881}
\nonumber IDF_n &=&\FS{d}{dx}F(H_n,x)\Big|_{x=1}\\
&=&6IDF_{n-1}-IDF_{n-2}+10\emph{$\Phi$}_{n-1}-2\Phi_{n-2}.
\end{eqnarray*}

So
\begin{eqnarray*}\label{eq882}
IDF_{n+1} &=&6IDF_{n}-IDF_{n-1}+10\Phi_{n}-2\Phi_{n-1},\\
IDF_{n+2} &=&6IDF_{n+1}-IDF_{n}+10\Phi_{n+1}-2\Phi_{n},
\end{eqnarray*}
by Eq. $(\ref{eq77})$, $\Phi_{n+1}=6\Phi_{n}-\Phi_{n-1}$ and $\Phi_{n}=6\Phi_{n-1}-\Phi_{n-2}$, which implies
\begin{eqnarray}\label{eq88}
\nonumber IDF_{n+2} &=&6IDF_{n+1}-IDF_{n}+10(6\Phi_{n}-\Phi_{n-1})-2(6\Phi_{n-1}-\Phi_{n-2})\\
\nonumber &=&6IDF_{n+1}-IDF_{n}+60\Phi_{n}-22\Phi_{n-1}+2\Phi_{n-2}\\
\nonumber &=&6IDF_{n+1}-IDF_{n}+6(6IDF_{n}-IDF_{n-1}+10\Phi_{n}-2\Phi_{n-1})
 -(6IDF_{n-1}\\
\nonumber &~&-IDF_{n-2}+10\Phi_{n-1}-2\Phi_{n-2})-36IDF_{n}+12IDF_{n-1}-IDF_{n-2}\\
&=&12IDF_{n+1}-38IDF_{n}+12IDF_{n-1}-IDF_{n-2}.
\end{eqnarray}
Therefore the homogeneous characteristics equation of recurrence formula $(\ref{eq88})$ is $x^4-12x^3 +38x^2-12x+1=0$,
and its roots are $x_1=x_2=3-2\sqrt{2}$, $x_3=x_4=3+2\sqrt{2}$.
Suppose the general solution of Eq. $(\ref{eq88})$ is $IDF_{n}=\lambda_{1}(3-2\sqrt{2})^n+\lambda_{2}n(3-2\sqrt{2})^n
+\lambda_{3}(3+2\sqrt{2})^n+\lambda_{4}n(3+2\sqrt{2})^n$.
According to the initial values $IDF_{3}=1036$,
$IDF_{4}=8068$, $IDF_{5}=58854$ and $IDF_{6}=411978$,
we can obtain $\lambda_{1}=\frac{\sqrt{2}}{32}$, $\lambda_{2}=\frac{7-5\sqrt{2}}{8}$,
$\lambda_{3}=-\frac{\sqrt{2}}{32}$ and $\lambda_{4}=\frac{7+5\sqrt{2}}{8}$,
so Eq. $(\ref{eq79})$ holds for $n\geq3$. In fact,
we can check that Eq. $(\ref{eq79})$ also holds for $n=0,1,2$, so the proof is completed.
\end{proof}

By Eq. $(\ref{eq78})$ and Eq. $(\ref{eq79})$,
the following result is obtained.

\begin{corollary}{\bf .}
Let $H_n$ be a pyrene system with $n$ pyrene fragments. Then
\begin{equation}
\nonumber \lim\limits_{n\rightarrow \infty}\FS{IDF(H_n)}{n\Phi(H_n)}=1+\FS{\sqrt{2}}{2}.
\end{equation}
\end{corollary}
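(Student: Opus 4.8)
The plan is to read off the asymptotic leading behaviour of both $IDF(H_n)$ and $n\Phi(H_n)$ directly from the closed forms in Eq.~$(\ref{eq78})$ and Eq.~$(\ref{eq79})$, and then divide. The key arithmetic observation is that $3-2\sqrt{2}$ and $3+2\sqrt{2}$ are reciprocals, since $(3-2\sqrt{2})(3+2\sqrt{2})=9-8=1$; in particular $0<3-2\sqrt{2}<1<3+2\sqrt{2}$, so $(3-2\sqrt{2})^n\to 0$ while $(3+2\sqrt{2})^n\to\infty$ as $n\to\infty$, and $(3-2\sqrt{2})^n/(3+2\sqrt{2})^n=(3-2\sqrt{2})^{2n}\to 0$. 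Thus both sequences are dominated by the growing base $3+2\sqrt{2}$, and within $IDF(H_n)$ the term carrying the extra factor $n$ dominates the one without it.

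First I would divide both $IDF(H_n)$ and $n\Phi(H_n)$ by $n(3+2\sqrt{2})^n$ and track each term. In $IDF(H_n)$ the two terms with factor $(3-2\sqrt{2})^n$ vanish (suppressed by $(3-2\sqrt{2})^{2n}$, and one also by $1/n$); the term $-\frac{\sqrt{2}}{32}(3+2\sqrt{2})^n$ contributes $-\frac{\sqrt{2}}{32n}\to 0$; and only $\frac{7+5\sqrt{2}}{8}n(3+2\sqrt{2})^n$ survives, so $IDF(H_n)/[n(3+2\sqrt{2})^n]\to\frac{7+5\sqrt{2}}{8}$. Likewise dividing $\Phi(H_n)$ by $(3+2\sqrt{2})^n$ kills the $(3-2\sqrt{2})^n$ term and leaves $\frac{17+12\sqrt{2}}{16+12\sqrt{2}}$, so $n\Phi(H_n)/[n(3+2\sqrt{2})^n]\to\frac{17+12\sqrt{2}}{16+12\sqrt{2}}$.

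It then remains to evaluate the ratio of the two surviving coefficients,
\[
\lim_{n\to\infty}\frac{IDF(H_n)}{n\Phi(H_n)}
=\frac{\frac{7+5\sqrt{2}}{8}}{\frac{17+12\sqrt{2}}{16+12\sqrt{2}}}
=\frac{(7+5\sqrt{2})(16+12\sqrt{2})}{8(17+12\sqrt{2})}.
\]
Expanding the numerator gives $(7+5\sqrt{2})(16+12\sqrt{2})=232+164\sqrt{2}$ and the denominator is $136+96\sqrt{2}$; checking that $\tfrac{2+\sqrt{2}}{2}(136+96\sqrt{2})=232+164\sqrt{2}$ (or rationalizing $136+96\sqrt{2}$) confirms the quotient equals $\frac{2+\sqrt{2}}{2}=1+\frac{\sqrt{2}}{2}$, as claimed.

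The computation is essentially routine once the closed forms are granted; no delicate interchange of limits is needed, since every decaying contribution is controlled uniformly by the single estimate $(3-2\sqrt{2})^{2n}\to 0$. The only step demanding genuine care is the final simplification, where one must verify that the ratio of the two leading coefficients collapses exactly to $1+\frac{\sqrt{2}}{2}$ rather than to some merely nearby surd expression; this is the place where an arithmetic slip would be easy to make.
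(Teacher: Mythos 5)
Your proposal is correct and follows exactly the route the paper intends: the paper derives this corollary directly from the closed forms for $\Phi(H_n)$ (Eq.~(3.6)) and $IDF(H_n)$ (Eq.~(3.7)), identifying the dominant terms with base $3+2\sqrt{2}$ and taking the ratio of leading coefficients. Your expansion $(7+5\sqrt{2})(16+12\sqrt{2})=232+164\sqrt{2}$ and the verification $\frac{2+\sqrt{2}}{2}(136+96\sqrt{2})=232+164\sqrt{2}$ are both accurate, so the limit $1+\frac{\sqrt{2}}{2}$ is confirmed.
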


\section{Anti-forcing polynomial of pyrene system}

The anti-forcing polynomial of a graph $G$
is defined as follow \cite{Hwang}:
\begin{equation}\label{eq101}
Af(G,x)=\sum_{M\in \mathcal{M}(G)} x^{af(G,M)}
=\sum_{i=af(G)}^{Af(G)} u_ix^{i},
\end{equation}
where $u_i$ is the number of perfect matchings of $G$
with the anti-forcing number $i$.

As a consequence, $\Phi(G)=Af(G,1)$, and the sum over the anti-forcing
numbers of all perfect matchings of $G$ equals $\frac{d}{dx}Af(G,x)\big|_{x=1}$.
If $G$ is a null graph or a graph with unique perfect matching,
then $Af(G,x)=1$.
Lemma \ref{minimax3} provides an approach for calculating the
anti-forcing number of a perfect matching of a pyrene system,
further we can obtain the following recursive formula.

\begin{theorem}\label{main7}{\bf .}
Let $H_n$ be the pyrene system with $n$ pyrene fragments.
Then
\begin{equation}\label{eq100}
Af(H_n,x)=(2x^3+2x^2+2x)Af(H_{n-1},x)-x^2Af(H_{n-2},x),
\end{equation}
where $n\geq2$, $Af(H_0,x)=1$ and $Af(H_1,x)=2x^3+2x^2+2x$.
\end{theorem}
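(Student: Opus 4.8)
The plan is to follow the proof of Theorem~\ref{main3} step by step, using the same auxiliary graph $G_n=H_n-h_{1,1}$ (Fig.~\ref{py1}) and the same splitting of $\mathcal{M}(H_n)$ into the matchings with $e_{1,2},f_{1,2}\in M$ and those with $e_{1,2},f_{1,2}\notin M$, but with one essential replacement: in place of the resonant-set identity $f=h(M)$ of Lemma~\ref{feh}, the anti-forcing number is read off from $af(H_n,M)=c'(M)$ (Theorem~\ref{minimax2}) together with Lemma~\ref{minimax3}, which lets me assume that every member of an optimal $M$-alternating set is a hexagon or the periphery of a triphenylene. The aim is to establish the two companion recurrences
\[
Af(H_n,x)=x\,Af(G_{n-1},x)+q\,Af(H_{n-1},x),\qquad Af(G_n,x)=x\,Af(G_{n-1},x)+s\,Af(H_{n-1},x),
\]
for suitable boundary polynomials $q$ and $s$, and then to eliminate $G$ exactly as in Theorem~\ref{main3}.

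For the matchings with $e_{1,2},f_{1,2}\in M$ the hexagon $h_{1,2}$ is the unique $M$-alternating hexagon of the leftmost fragment and $M'=M\cap E(G_{n-1})$ is a perfect matching of $G_{n-1}$. The point to verify is that the only triphenylene periphery meeting the left end, namely the periphery $T$ of $\{s_{1,1},s_{1,2},h_{1,2},h_{2,1}\}$, is not $M$-alternating once $e_{1,2},f_{1,2}\in M$; granting this, a maximum non-crossing compatible set is obtained by adjoining $h_{1,2}$ to such a set for $M'$ in $G_{n-1}$, so that $af(H_n,M)=af(G_{n-1},M')+1$. This produces the leading factor $x$ in both recurrences, the argument for $G_n$ being identical with the leftmost phenanthrene $\{s_{1,1},s_{1,2},h_{1,2}\}$ in place of the pyrene fragment.

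The subtle class is $e_{1,2},f_{1,2}\notin M$. For $H_n$ the matching restricts to the phenanthrene $L=\{s_{1,1},h_{1,1},s_{1,2}\}$ and to $H_{n-1}$; here $T$ may be $M$-alternating, but since $h_{1,1}\in L$ I expect the exchange step of Lemma~\ref{minimax3} (replacing a periphery by the alternating hexagon of Lemma~\ref{resonance} in its interior) to move every optimal occurrence of $T$ into cycles lying wholly inside $L$ or wholly inside $H_{n-1}$, giving additivity $af(H_n,M)=af(L,M_1)+af(H_{n-1},M_2)$ and hence $q=Af(L,x)=2x^3+2x^2+x$, which one computes directly from the fact that phenanthrene has a forcing single edge and Fries number $3$. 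For $G_n$, however, the leftmost unit is the diphenyl $N=\{s_{1,1},s_{1,2}\}$ and $h_{1,1}$ is absent; now $T$ is genuinely forced into some optimal sets, so that $af(G_n,M)$ can exceed $af(N,M_1)+af(H_{n-1},M_2)$, and the effective boundary polynomial is not the standalone $Af(N,x)=4x^2$ but $s=2x^3+2x^2$. Pinning down exactly these matchings is the heart of the argument.

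With $q=2x^3+2x^2+x$ and $s=2x^3+2x^2$, subtracting the two recurrences gives $Af(G_n,x)=Af(H_n,x)-x\,Af(H_{n-1},x)$; shifting the index and substituting $Af(G_{n-1},x)=Af(H_{n-1},x)-x\,Af(H_{n-2},x)$ back into the first recurrence yields
\[
Af(H_n,x)=(x+q)Af(H_{n-1},x)-x^2Af(H_{n-2},x)=(2x^3+2x^2+2x)Af(H_{n-1},x)-x^2Af(H_{n-2},x),
\]
which is Eq.~(\ref{eq100}); the initial data $Af(H_0,x)=1$ and $Af(H_1,x)=2x^3+2x^2+2x$ are obtained by direct enumeration. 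I expect the main obstacle to be exactly the asymmetry between the two cases of the previous paragraph. For the forcing number Lemma~\ref{feh} reduces everything to pairwise disjoint alternating hexagons, each confined to a single fragment, so additivity across the cut is automatic; for the anti-forcing number the triphenylene periphery $T$ straddles the junction, naive additivity fails in the diphenyl case, and the real work is to show, via Lemma~\ref{minimax3}, that its net effect is nonetheless captured by the single polynomial $s=2x^3+2x^2$. Once the two boundary polynomials are justified, the remaining generating-function manipulation is routine.
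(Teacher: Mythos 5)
Your two-term system mimics the proof of Theorem \ref{main3}, but for anti-forcing that structure is false, and the two places you yourself flag as ``to verify'' are exactly where it breaks. In Case 1 you need that the periphery $T$ of the triphenylene $\{s_{1,1},s_{1,2},h_{1,2},h_{2,1}\}$ is never $M$-alternating when $e_{1,2},f_{1,2}\in M$. This cannot be verified because it is false: $e_{1,2}$ and $f_{1,2}$ are edges of $T$ itself, and when in addition $p_{2,1},q_{2,1}\in M$ (the paper's Subcase 1.1) the cycle $T$ is $M$-alternating and compatible with $h_{1,2},s_{2,1},s_{2,2}$; for such $M$ one gets $af(H_n,M)=af\big(G_{n-1},M\cap E(G_{n-1})\big)+2$, not $+1$. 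In Case 2 both of your claims fail: the boundary polynomial is miscomputed --- enumerating the five perfect matchings of phenanthrene gives $Af(L,x)=x^3+3x^2+x$ (one matching of anti-forcing number $3$, namely the Fries structure, three of number $2$, one of number $1$), not $2x^3+2x^2+x$ --- and additivity $af(H_n,M)=af(L,M_1)+af(H_{n-1},M_2)$ is destroyed by precisely those compatible cycles (the triphenylene periphery and longer cycles) that straddle the cut. A one-line falsification of your system: since $G_1\cong L$ is a phenanthrene, your two recurrences force $Af(G_1,x)=Af(H_1,x)-x\,Af(H_0,x)=2x^3+2x^2+x$, whereas in fact $Af(G_1,x)=x^3+3x^2+x$; the correct relation, which the paper derives, is $Af(G_n,x)=Af(H_n,x)-(x^3-x^2+x)Af(H_{n-1},x)$.

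That your elimination still lands on Eq. (\ref{eq100}) is a cancellation of two compensating errors, not a proof. The paper's actual intermediate recurrences, Eqs. (\ref{c7}) and (\ref{c8}), are three-term: each carries the extra summand $(x^4-x^3)Af(H_{n-2},x)$ created by the straddling triphenylene, and the coefficients of $Af(H_{n-1},x)$ are $2x^3+2x^2+x$ and $x^3+3x^2$ respectively; the first arises not as $Af(L,x)$ but as $Af(H_1,x)-x$, the sum over the pyrene matchings containing the cut edge $d$ (Subcase 2.2), a numerical coincidence with your $q$. Indeed no fixed polynomials $q,s$ can make your two-term form valid for all $n$: combining Eq. (\ref{eq100}) with the displayed relation for $Af(G_{n-1},x)$ shows your first recurrence would require $(2x^3+2x^2+x-q)Af(H_{n-1},x)=(x^4-x^3)Af(H_{n-2},x)$ for every $n$, which is impossible for fixed $q$ since the ratio $Af(H_{n-2},x)/Af(H_{n-1},x)$ depends on $n$. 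The repair is the paper's finer case analysis: split Case 1 according to whether $p_{2,1},q_{2,1}\in M$; split Case 2 into $Y_3$ (five subclasses of phenanthrene restrictions, contributing $(2x^4+2x^3+x^2)Af(H_{n-2},x)$) and $Y_4$ (where $M$ factors across the edge $d$ and the two restricted generating sums multiply); and only then eliminate $Af(G_{n-1},x)$. As it stands, your proposal derives a true identity from false intermediate statements, so it is not a proof.
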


\begin{proof}
First we divide $\mathcal{M}(H_n)$ in two subsets:
$\mathcal{M}_{f_{1,2}}^{e_{1,2}}(H_n)=\{M\in\mathcal{M}(H_n)\mid e_{1,2},f_{1,2}\in M\}$, $\mathcal{M}_{\bar{f}_{1,2}}^{\bar{e}_{1,2}}(H_n)=\{M\in\mathcal{M}(H_n)\mid e_{1,2},f_{1,2}\not\in M\}$. There are two cases to be considered.

\textbf{Case 1.} Suppose $e_{1,2}$ and $f_{1,2}$ both belong to $M$.
Then the restriction $M_1$ of $M$ on the leftmost pyrene fragment is a perfect
matching of it, and $h_{1,2}$ is an $M$-alternating hexagon ,see Fig. \ref{py}.

\textbf{Subcase 1.1.} If $p_{2,1}$ and $q_{2,1}$ both belong to $M$,
then the hexagons $s_{2,1}$ and $s_{2,2}$ both are $M$-alternating,
and the four hexagons $s_{1,1},s_{1,2},h_{1,2},h_{2,1}$
form a triphenylene whose perimeter $T$ is an $M$-alternating cycle,
and $\{h_{1,2},s_{2,1},s_{2,2},T\}$ is a non-crossing
compatible $M$-alternating set.
Note that the restriction $M^\prime$
of $M$ on the subsystem $H_{n-2}$ obtained by the removal of the first two pyrene fragments
is a perfect matching of $H_{n-2}$.
Let $\mathcal{A}^\prime$ be a maximum non-crossing
compatible $M^\prime$-alternating set of $H_{n-2}$,
by Lemma \ref{minimax3},
then $\{h_{1,2},s_{2,1},s_{2,2},T\}\cup \mathcal{A}^\prime$ is a
maximum non-crossing
compatible $M$-alternating set of $H_{n}$.
By Theorem \ref{minimax2},
$af(H_n,M)=4+af(H_{n-2},M^\prime)$.
Let $Y_1=\{M\in\mathcal{M}_{f_{1,2}}^{e_{1,2}}(H_n)|~p_{2,1},q_{2,1}\in M\}$,
by Eq. (\ref{eq101}),
\begin{equation}\label{c1}
\sum_{M\in Y_1}x^{af(H_n,M)}=\sum_{M^\prime\in \mathcal{M}(H_{n-2})}x^{4+af(H_{n-2},M^\prime)}
=x^4Af(H_{n-2},x).
\end{equation}

\textbf{Subcase 1.2.} If one of $p_{2,1},q_{2,1}$ does not
belong to $M$, then the perimeter of the triphenylene consisting of
the four hexagons $s_{1,1},s_{1,2},h_{1,2},h_{2,1}$ is not $M$-alternating.
Recall that $M_1\subseteq M$ is a perfect matching of the first pyrene fragment,
thus $M_2=M\setminus M_1$ is a perfect matching of the subgraph $G_{n-1}$ (see Fig. \ref{py1}).
By Lemma \ref{minimax3}, $af(H_n,M)=1+af(G_{n-1},M_2)$.
Let $X$ be a perfect matching of $G_{n-1}$.
Suppose $X$ contains edges $p_{2,1},q_{2,1}$,
then $s_{2,1}$ and $s_{2,2}$ both are $X$-alternating hexagons,
and $X_1=X\cap E(H_{n-2})$ is a perfect matching of the subsystem $H_{n-2}$
obtained by deleting the vertices of the leftmost diphenyl of $G_{n-1}$ and
their incident edges.
Note that Lemma \ref{minimax3} also holds for the auxiliary graph $G_n$,
and $h_{2,2}$ is not $X$-alternating, so $af(G_{n-1},X)=2+af(H_{n-2},X_1)$.
Let $\mathcal{M}^{p_{2,1}}_{q_{2,1}}(G_{n-1})=\{X\in  \mathcal{M}(G_{n-1})|p_{2,1},q_{2,1}\in X\}$, $Y_2=\mathcal{M}_{f_{1,2}}^{e_{1,2}}(H_n)\setminus Y_1$,
then
\begin{eqnarray}\label{c2}
\nonumber \sum_{M\in Y_2}x^{af(H_n,M)}&=&\sum_{M_2\in \mathcal{M}(G_{n-1})\setminus\mathcal{M}^{p_{2,1}}_{q_{2,1}}(G_{n-1})}x^{1+af(G_{n-1},M_2)}\\
\nonumber &=&x\Big{(}\sum_{X\in \mathcal{M}(G_{n-1})}x^{af(G_{n-1},X)}-\sum_{X\in \mathcal{M}^{p_{2,1}}_{q_{2,1}}(G_{n-1})}x^{af(G_{n-1},X)}\Big{)}\\
\nonumber &=& x\Big{(}Af(G_{n-1},x)-\sum_{X_1\in \mathcal{M}(H_{n-2})}x^{2+af(H_{n-2},X_1)}\Big{)}\\
&=& xAf(G_{n-1},x)-x^3Af(H_{n-2},x).
\end{eqnarray}

\textbf{Case 2.} Suppose $e_{1,2}$ and $f_{1,2}$ both are not in $M$,
then we can divide $\mathcal{M}_{\bar{f}_{1,2}}^{\bar{e}_{1,2}}(H_n)$
in two subsets $Y_3=\{M\in\mathcal{M}_{\bar{f}_{1,2}}^{\bar{e}_{1,2}}(H_n)|e_{2,1},f_{2,1}\in M\}$
and $Y_4=\{M\in\mathcal{M}_{\bar{f}_{1,2}}^{\bar{e}_{1,2}}(H_n)|e_{2,1},f_{2,1}\not\in M\}$.

\textbf{Subcase 2.1.} Suppose $M\in Y_3$, then $h_{2,1}$ must be an
$M$-alternating hexagon,
and the restrictions $M_1$ and $M_2$ of $M$ on the leftmost phenanthrene $L$
and the rightmost subsystem $H_{n-2}$ are perfect matchings of $L$ and $H_{n-2}$
respectively (see Fig. \ref{py}).
Let $\mathcal{A}^\prime$ be a maximum non-crossing compatible
$M_2$-alternating set of $H_{n-2}$.
Note that $M_1$ contains only  five distinct members,
we can divide $Y_3$ in five subsets:
$Y_{3,1}=\{M\in Y_3|p_{1,2},q_{1,2}\in M\}$,
$Y_{3,2}=\{M\in Y_3|p_{1,1},q_{1,1}\in M\}$,
$Y_{3,3}=\{M\in Y_3|e_{1,1},f_{1,1}\in M\}$,
$Y_{3,4}=\{M\in Y_3|p_{1,2}\in M,q_{1,2}\not\in M\}$,
$Y_{3,5}=\{M\in Y_3|p_{1,2}\not\in M,q_{1,2}\in M\}$.
If $M\in Y_{3,1}$,
then the four hexagons $h_{1,2},h_{2,1},s_{2,1},s_{2,2}$
form a triphenylene whose perimeter $T$ is an $M$-alternating cycle,
and $\{s_{1,1},s_{1,2},h_{2,1},T\}$ is a non-crossing
compatible $M$-alternating set.
By Lemma \ref{minimax3},
$\{s_{1,1},s_{1,2},h_{2,1},T\}\cup \mathcal{A}^\prime$
is a maximum non-crossing compatible
$M$-alternating set of $H_{n}$.
By Theorem \ref{minimax2},
$af(H_n,M)=4+af(H_{n-2},M_2)$,
which implies that $\sum_{M\in Y_{31}}x^{af(H_n,M)}=x^4Af(H_{n-2},x)$.
If $M\in Y_{3,2}$,
then $\{s_{1,1},s_{1,2},h_{1,1},h_{2,1}\}$ is an non-crossing compatible
$M$-alternating set, and $\{s_{1,1},s_{1,2},h_{1,1},h_{2,1}\}\cup \mathcal{A}^\prime$ is a maximum non-crossing compatible
$M$-alternating set of $H_{n}$.
By Theorem \ref{minimax2},
$af(H_n,M)=4+af(H_{n-2},M_2)$,
so $\sum_{M\in Y_{3,2}}x^{af(H_n,M)}=x^4Af(H_{n-2},x)$.
If $M\in Y_{3,3}$,
then $\{h_{1,1},h_{2,1}\}\cup \mathcal{A}^\prime$ is a maximum non-crossing compatible
$M$-alternating set of $H_{n}$.
By Theorem \ref{minimax2},
$af(H_n,M)=2+af(H_{n-2},M_2)$,
we have $\sum_{M\in Y_{3,3}}x^{af(H_n,M)}=x^2Af(H_{n-2},x)$.
If $M\in Y_{3,4}$ or $M\in Y_{3,5}$,
then $\{s_{1,1},s_{1,2},h_{2,1}\}\cup \mathcal{A}^\prime$ is
a maximum non-crossing compatible
$M$-alternating set of $H_{n}$.
By Theorem \ref{minimax2}, $af(H_n,M)=3+af(H_{n-2},M_2)$,
thus $\sum_{M\in Y_{3,4}}x^{af(H_n,M)}+\sum_{M\in Y_{3,5}}x^{af(H_n,M)}=2x^3Af(H_{n-2},x)$.
Finally, we have
\begin{equation}\label{c3}
\sum_{M\in Y_{3}}x^{af(H_n,M)}=\sum_{j=1}^{5}\sum_{M\in Y_{3,j}}x^{af(H_n,M)}
=(2x^4+2x^3+x^2)Af(H_{n-2},x).
\end{equation}

\textbf{Subcase 2.2.} If $M\in Y_4$,
then the common vertical edge $d$ of $h_{1,2}$ and $h_{2,1}$
belongs to $M$,
and the restrictions $M_1$ and $M_2$ of $M$
on the leftmost pyrene fragment $H_1$ and the rightmost subsystem
$H_{n-1}$ are perfect matchings of $H_1$ and $H_{n-1}$ respectively (see Fig. \ref{py}).
We divide $\mathcal{M}(H_1)$ in two subsets:
$\mathcal{M}_d(H_1)=\{M_1\in\mathcal{M}(H_1)|d\in M_1\}$,
$\mathcal{M}_{\bar{d}}(H_1)=\{M_1\in\mathcal{M}(H_1)|d\not\in M_1\}$.
Note that $\mathcal{M}_{\bar{d}}(H_1)$ contains only one perfect matching $M_1^\prime$ of $H_1$, and $h_{1,2}$ is the unique $M_1^\prime$-alternating hexagon in $H_1$,
so $af(H_1,M_1^\prime)=1$,
we have
\begin{eqnarray}\label{c4}
\nonumber\sum_{M_1\in \mathcal{M}_d(H_1)}x^{af(H_1,M_1)}&=&\sum_{M_1\in \mathcal{M}(H_1)}x^{af(H_1,M_1)}~-\sum_{M_1^\prime\in \mathcal{M}_{\bar{d}}(H_1)}x^{af(H_1,M_1^\prime)}\\
\nonumber &=&Af(H_1,x)-x\\
&=&2x^3+2x^2+x.
\end{eqnarray}
We also divide $\mathcal{M}(H_{n-1})$ in two subsets:
$\mathcal{M}_d(H_{n-1})=\{M_2\in\mathcal{M}(H_{n-1})|d\in M_2\}$,
$\mathcal{M}_{\bar{d}}(H_{n-1})=\{M_2\in\mathcal{M}(H_{n-1})|d\not\in M_2\}$.
Suppose $M_2\in \mathcal{M}_{\bar{d}}(H_{n-1})$,
then $e_{2,1},f_{2,1}\in M_2$ and $h_{2,1}$ is an $M_2$-alternating hexagon,
and the restriction $M_2^\prime$ of $M_2$ on the rightmost
subsystem $H_{n-2}$ is a perfect matching of $H_{n-2}$.
Let $\mathcal{A}^\prime$ be a maximum non-crossing compatible
$M_2^\prime$-alternating set of $H_{n-2}$.
Then $\mathcal{A}^\prime\cup \{h_{2,1}\}$ is
a maximum non-crossing compatible
$M_2$-alternating set of $H_{n-1}$.
Thus $af(H_{n-1,M_2})=1+af(H_{n-2},M_2^\prime)$,
we have

\begin{eqnarray}\label{c5}
\nonumber \sum_{M_2\in \mathcal{M}_{d}(H_{n-1})}x^{af(H_{n-1},M_2)}&=&
\sum_{M_2\in \mathcal{M}(H_{n-1})}x^{af(H_{n-1},M_2)}~-
\sum_{M_2\in \mathcal{M}_{\bar{d}}(H_{n-1})}x^{af(H_{n-1},M_2)}\\
\nonumber &=& Af(H_{n-1},x)-\sum_{M_2^\prime\in \mathcal{M}(H_{n-2})}x^{1+af(H_{n-2},M_2^\prime)}\\
 &=& Af(H_{n-1},x)-xAf(H_{n-2},x).
\end{eqnarray}

Recall that $d$ is the common edge of $h_{1,2}$ and $h_{2,1}$,
for any $M\in Y_4$,
then $M=M_1\cup M_2$,
where $M_1$ is a perfect matching of the first pyrene fragment $H_1$
and $M_2$ is a perfect matching of the rightmost subsystem $H_{n-1}$,
and $\{d\}=M_1\cap M_2$.
By Theorem \ref{minimax2} and Lemma \ref{minimax3},
we have $af(H_n,M)=af(H_1,M_1)+af(H_{n-1},M_2)$.
According to Eqs. (\ref{c4}) and (\ref{c5}),
we have
\begin{eqnarray}\label{c6}
\nonumber\sum_{M\in Y_4}x^{af(H_n,M)}&=&\sum_{M_1\in \mathcal{M}_d(H_1),M_2\in \mathcal{M}_{d}(H_{n-1})}x^{af(H_1,M_1)+af(H_{n-1},M_2)}\\
\nonumber &=& \Big{(}\sum_{M_1\in \mathcal{M}_d(H_1)}x^{af(H_1,M_1)}\Big{)}\Big{(}\sum_{M_2\in \mathcal{M}_{d}(H_{n-1})}x^{af(H_{n-1},M_2)}\Big{)}\\
\nonumber &=&(2x^3+2x^2+x)(Af(H_{n-1},x)-xAf(H_{n-2},x))\\
\nonumber&=&(2x^3+2x^2+x)Af(H_{n-1},x)-(2x^4+2x^3+x^2)Af(H_{n-2},x).\\
~~
\end{eqnarray}

By Eqs. (\ref{c1}), (\ref{c2}), (\ref{c3}) and (\ref{c6}),
we obtain a recursive relation as below:
\begin{eqnarray}\label{c7}
\nonumber Af(H_n,x)&=&\sum_{M\in \mathcal{M}(H_n)}x^{af(H_n,M)}\\
\nonumber&=&\sum_{M\in Y_1}x^{af(H_n,M)}+\sum_{M\in Y_2}x^{af(H_n,M)}+\sum_{M\in Y_3}x^{af(H_n,M)}+\sum_{M\in Y_4}x^{af(H_n,M)}\\
\nonumber &=& (2x^3+2x^2+x)Af(H_{n-1},x)+(x^4-x^3)Af(H_{n-2},x)+xAf(G_{n-1},x).\\
~~
\end{eqnarray}

By a similar discussion as above,
we can prove the following recursive formula for the auxiliary
graph $G_n$ (see Fig. \ref{py1}),
\begin{equation}\label{c8}
Af(G_n,x)=(x^3+3x^2)Af(H_{n-1},x)+(x^4-x^3)Af(H_{n-2},x)+xAf(G_{n-1},x).
\end{equation}
Eq. (\ref{c7}) subtracts Eq. (\ref{c8}),
we have
\begin{equation}\label{c9}
\nonumber Af(G_n,x)=Af(H_n,x)-(x^3-x^2+x)Af(H_{n-1},x),
\end{equation}
so
\begin{equation}\label{c10}
\nonumber Af(G_{n-1},x)=Af(H_{n-1},x)-(x^3-x^2+x)Af(H_{n-2},x).
\end{equation}
Substituting this expression into Eq. (\ref{c7}),
we can obtain the Eq. (\ref{eq100}),
the proof is completed.
\end{proof}

By theorem \ref{main7}, we can
obtain an explicit expression as below.
\begin{theorem}\label{main6}{\bf .}
Let $H_n$ be the pyrene system with $n$ pyrene fragments.
Then
\begin{equation}\label{eq103}
Af(H_n,x)=x^{n}\sum_{l=0}^{2n}\sum_{i=\lceil\frac{l+2n}{4}\rceil}^{n}
\sum_{j=\lceil\frac{l}{2}\rceil}^{l}
(-1)^{n-i}2^{2i-n}{i\choose 2i-n}
{2i-n\choose j}{j\choose l-j} x^{l}.
\end{equation}
\end{theorem}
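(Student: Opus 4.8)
The plan is to derive the closed form directly from the three-term recurrence of Theorem \ref{main7} by the generating-function method, mirroring exactly the passage from Theorem \ref{main3} to Theorem \ref{main4}. Writing $A_n := Af(H_n,x)$ and $G(z) = \sum_{n\ge 0} A_n z^n$, I would first substitute the recurrence $A_n = (2x^3+2x^2+2x)A_{n-1} - x^2 A_{n-2}$ (valid for $n\ge 2$, with $A_0 = 1$ and $A_1 = 2x^3+2x^2+2x$) into the defining sum, split off the $n=0,1$ terms, and reorganize so that the two tail sums reproduce $G(z)$. This yields the functional equation $G(z) = 1 + (2x^3+2x^2+2x)zG(z) - x^2 z^2 G(z)$, hence
\[
G(z) = \frac{1}{1 - \big[(2x^3+2x^2+2x)z - x^2 z^2\big]}.
\]

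Next I would expand $G(z)$ as the geometric series $\sum_{i\ge 0}\big[(2x^3+2x^2+2x)z - x^2 z^2\big]^i$. The key algebraic observation is the factorization $(2x^3+2x^2+2x)z - x^2 z^2 = xz\big[(2x^2+2x+2) - xz\big]$, which lets me write the $i$-th term as $(xz)^i\big[(2x^2+2x+2)-xz\big]^i$ and apply the binomial theorem in the single quantity $xz$. Collecting the coefficient of $z^n$ forces the bookkeeping relation $k = n-i$ between the two exponents of $z$, after which $i$ ranges over $\lceil n/2\rceil \le i \le n$, producing the single-sum form
\[
A_n = x^n \sum_{i=\lceil n/2\rceil}^{n} (-1)^{n-i}\binom{i}{n-i}(2x^2+2x+2)^{2i-n},
\]
the exact anti-forcing analogue of the intermediate expression for $F_n$ in the proof of Theorem \ref{main4}.

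To reach the stated triple sum I would then expand $(2x^2+2x+2)^{2i-n} = 2^{2i-n}\big((x^2+x)+1\big)^{2i-n}$ in two stages. First the binomial theorem applied to $(x^2+x)+1$ introduces the index $j$ together with the factor $\binom{2i-n}{j}(x^2+x)^j$; then writing $(x^2+x)^j = x^j(x+1)^j$ and expanding $(x+1)^j$ introduces the index $l = j+t$ together with $\binom{j}{l-j}x^l$. Substituting these and using the complementation identity $\binom{i}{n-i} = \binom{i}{2i-n}$ gives an expression summed over $i$, $j$, $l$ carrying the overall factor $x^{n+l}$, matching the shape of $(\ref{eq103})$.

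The last—and most delicate—step is interchanging the order of summation to place $l$ outermost. The range $j \le l \le 2j$ inside $(x^2+x)^j$ translates into the inner bounds $\lceil l/2\rceil \le j \le l$, while the requirement $j \le 2i-n$, beyond which $\binom{2i-n}{j}$ vanishes, converts the nonemptiness of the $j$-range into a lower bound on $i$. The main obstacle I expect is verifying the ceiling identity $\lceil (n + \lceil l/2\rceil)/2\rceil = \lceil (l+2n)/4\rceil$, which is precisely what turns this nonemptiness condition into the clean limit $i \ge \lceil (l+2n)/4\rceil$ appearing in $(\ref{eq103})$; the terms with $j > 2i-n$ may be kept harmlessly since their binomial coefficient is $0$. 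Once this re-indexing and the ceiling computation are justified, the triple sum $(\ref{eq103})$ follows, completing the proof.
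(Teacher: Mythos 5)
Your proposal is correct and follows essentially the same route as the paper's own proof: the generating function $G(t)=\bigl(1-\bigl[(2x^3+2x^2+2x)t-x^2t^2\bigr]\bigr)^{-1}$, the geometric-series and binomial expansion yielding the single-sum form $x^n\sum_i(-1)^{n-i}2^{2i-n}\binom{i}{2i-n}(x^2+x+1)^{2i-n}$ (your $\binom{i}{n-i}$ is the same coefficient), the two-stage expansion of $(x^2+x+1)^{2i-n}=(x(x+1)+1)^{2i-n}$ introducing $j$ and $l$, and the final interchange of summation with the vanishing of $\binom{2i-n}{j}$ for $j>2i-n$ giving the lower limit $i\ge\lceil (l+2n)/4\rceil$. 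The ceiling identity you flag, $\lceil (n+\lceil l/2\rceil)/2\rceil=\lceil (l+2n)/4\rceil$, indeed holds (write $n+\lceil l/2\rceil=\lceil (2n+l)/2\rceil$ and use nested-ceiling composition), so no gap remains.
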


\begin{proof}
Let $A_n:= Af(H_n,x)$, then the generating function of sequence
$\{A_n\}_{n=0}^{\infty}$ is
\begin{eqnarray}\label{eq104}
\nonumber G(t)&=&\sum_{n=0}^{\infty}A_nt^n=1+(2x^3+2x^2+2x)t+\sum_{n=2}^{\infty}A_nt^n\\
\nonumber &=& 1+(2x^3+2x^2+2x)t+\sum_{n=2}^{\infty}((2x^3+2x^2+2x)A_{n-1}-x^2A_{n-2})t^n\\
\nonumber
&=&1+(2x^3+2x^2+2x)t\sum_{n=0}^{\infty}A_{n}t^n-x^2t^2\sum_{n=0}^{\infty}A_{n}t^n\\
\nonumber &=& 1+(2x^3+2x^2+2x)tG(t)-x^2t^2G(t).
\end{eqnarray}
So
\begin{eqnarray}
\nonumber G(t)&=&\FS{1}{1-((2x^3+2x^2+2x)t-x^2t^2)}\\
\nonumber &=&\sum_{i=0}^{\infty}((2x^3+2x^2+2x)t-x^2t^2)^i\\
\nonumber &=&\sum_{i=0}^{\infty}\sum_{j=0}^{i}{i\choose j}(2x^3+2x^2+2x)^jt^j(-x^2t^2)^{i-j}\\
\nonumber &=&\sum_{i=0}^{\infty}\sum_{n=i}^{2i}(-1)^{n-i}2^{2i-n}{i\choose 2i-n}
(x^2+x+1)^{2i-n}x^{n}t^{n}\\
\nonumber &=&\sum_{n=0}^{\infty}\sum_{i=\lceil\frac{n}{2}\rceil}^{n}(-1)^{n-i}2^{2i-n}{i\choose 2i-n}
(x^2+x+1)^{2i-n}x^{n}t^{n},
\end{eqnarray}
we have
\begin{eqnarray}\label{eq105}
\nonumber Af(H_n,x)&=&x^{n}\sum_{i=\lceil\frac{n}{2}\rceil}^{n}(-1)^{n-i}2^{2i-n}{i\choose 2i-n}
(x^2+x+1)^{2i-n}\\
\nonumber &=& x^{n}\sum_{i=\lceil\frac{n}{2}\rceil}^{n}(-1)^{n-i}2^{2i-n}{i\choose 2i-n}
\sum_{j=0}^{2i-n}{2i-n\choose j}x^{j}\sum_{k=0}^{j}{j\choose k}x^{k}\\
\nonumber &=& x^{n}\sum_{i=\lceil\frac{n}{2}\rceil}^{n}(-1)^{n-i}2^{2i-n}{i\choose 2i-n}
\sum_{j=0}^{2i-n}\sum_{l=j}^{2j}{2i-n\choose j}{j\choose l-j} x^{l}\\
\nonumber &=& x^{n}\sum_{l=0}^{2n}\sum_{i=\lceil\frac{l+2n}{4}\rceil}^{n}
\sum_{j=\lceil\frac{l}{2}\rceil}^{l}
(-1)^{n-i}2^{2i-n}{i\choose 2i-n}
{2i-n\choose j}{j\choose l-j} x^{l}.
\end{eqnarray}
\end{proof}

According to Theorem \ref{main6}, the following corollary is immediate.

\begin{corollary}{\bf .}
Let $H_n$ be a pyrene system with $n$ pyrene fragments.
Then
\begin{enumerate}
  \item $af(H_n)=n$;
  \item $Af(H_n)=3n$;
  \item Spec$_{af(H_n)}=[n,3n]$.
\end{enumerate}
\end{corollary}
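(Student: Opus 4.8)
The plan is to read all three statements off the structure of $Af(H_n,x)$ rather than from the explicit triple sum directly. First I would strip the common factor of $x$: writing $Af(H_n,x)=x^nQ_n(x)$ and substituting into the recurrence of Theorem \ref{main7}, the factor $x^n$ cancels and one is left with the cleaner three-term recurrence $Q_n(x)=(2x^2+2x+2)Q_{n-1}(x)-Q_{n-2}(x)$ with $Q_0=1$ and $Q_1=2x^2+2x+2$ (equivalently, $Q_n$ is the inner sum $\sum_{i=\lceil n/2\rceil}^{n}(-1)^{n-i}2^{2i-n}\binom{i}{2i-n}(x^2+x+1)^{2i-n}$ that appears midway through the proof of Theorem \ref{main6}). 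Since $Af(H_n,x)=\sum_i u_ix^i$, the smallest and largest exponents carrying a nonzero coefficient are exactly $af(H_n)$ and $Af(H_n)$, and $\mathrm{Spec}_{af}(H_n)$ is the set of exponents $i$ with $u_i>0$. So everything reduces to locating the support of $x^nQ_n(x)$ and proving it has no gaps.

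For part (1), divisibility by $x^n$ gives $af(H_n)\le n$, and for equality I must check $u_n=Q_n(0)\neq0$. Evaluating the recurrence at $x=0$ gives $Q_n(0)=2Q_{n-1}(0)-Q_{n-2}(0)$ with $Q_0(0)=1$, $Q_1(0)=2$, whose solution is $Q_n(0)=n+1>0$; hence $af(H_n)=n$. For part (2), each step multiplies by the degree-two factor $2x^2+2x+2$ while subtracting a strictly lower-degree polynomial, so $\deg Q_n=2n$, $\deg Af(H_n,x)=3n$, and $Af(H_n)\le 3n$; the leading coefficient of $Q_n$ doubles at every step, giving $u_{3n}=2^n>0$ and therefore $Af(H_n)=3n$.

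The real content is part (3): that every integer in $[n,3n]$ is attained, i.e. that all coefficients $q_{n,0},\dots,q_{n,2n}$ of $Q_n$ are strictly positive. This is the step I expect to be the main obstacle, since the recurrence subtracts $Q_{n-2}$ (so positivity is not preserved coefficientwise in any automatic way) and the explicit sum of Theorem \ref{main6} carries the alternating sign $(-1)^{n-i}$. My plan is to prove a monotonicity lemma by induction: coefficientwise $Q_n\succeq Q_{n-1}$, reading any absent higher-degree coefficient of $Q_{n-1}$ as $0$. Indeed $Q_n-Q_{n-1}=(2x^2+2x+1)Q_{n-1}-Q_{n-2}$, and since $(2x^2+2x+1)Q_{n-1}\succeq Q_{n-1}\succeq Q_{n-2}$ by the inductive hypothesis (the term $(2x^2+2x)Q_{n-1}$ contributing only nonnegative coefficients), this difference has nonnegative coefficients; the base case $Q_1\succeq Q_0$ is immediate. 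Chaining $Q_n\succeq Q_{n-1}\succeq\cdots\succeq Q_0$ then shows that each fixed-degree coefficient, once positive, stays positive, so all coefficients up to degree $2n-2$ are positive, while the two top coefficients (degrees $2n-1$ and $2n$) are computed directly from the recurrence and are each at least $2^n>0$. Together these give $u_i>0$ for every $n\le i\le 3n$, and hence $\mathrm{Spec}_{af}(H_n)=[n,3n]$.
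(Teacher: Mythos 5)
Your proposal is correct, and it takes a genuinely different route from the paper. The paper offers no argument at all: it simply declares the corollary ``immediate'' from the explicit triple-sum expression of Theorem \ref{main6}. That formula, however, carries the alternating sign $(-1)^{n-i}$, so the one claim with real content --- that no coefficient $u_i$ with $n\le i\le 3n$ vanishes, i.e.\ that the spectrum has no gaps --- is not actually visible from it. Your proof supplies exactly this missing argument: you work from the recurrence of Theorem \ref{main7} instead, factor out $x^n$ to get $Q_n(x)=(2x^2+2x+2)Q_{n-1}(x)-Q_{n-2}(x)$ with $Q_0=1$, $Q_1=2x^2+2x+2$, and establish strict positivity of all $2n+1$ coefficients of $Q_n$ via the coefficientwise monotonicity $Q_n\succeq Q_{n-1}$, whose inductive step $Q_n-Q_{n-1}=(2x^2+2x)Q_{n-1}+(Q_{n-1}-Q_{n-2})\succeq 0$ is sound (chaining down to $Q_0=1$ gives the needed $Q_{n-1}\succeq 0$), with the two top coefficients handled directly from the recurrence; the evaluations $Q_n(0)=n+1$ and the leading coefficient $2^n$ settle parts (1) and (2). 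What your approach buys is a self-contained, rigorous proof of part (3) that never touches the alternating sum; what the paper's approach buys is brevity, implicitly leaning on the fact that the $u_i$ are counts of perfect matchings (hence automatically nonnegative), though even granting that, strict positivity still needs an argument like yours. One slip of wording in part (1): divisibility of $Af(H_n,x)$ by $x^n$ gives $af(H_n)\ge n$, not $\le n$, while $u_n=Q_n(0)=n+1>0$ gives $af(H_n)\le n$; since you prove both facts, the conclusion is unaffected.
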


In the following, we will calculate the sum over the anti-forcing
numbers of all perfect matchings of $H_n$,
and investigate its asymptotic behavior.

\begin{theorem}\label{main8}{\bf .}
The sum over the anti-forcing
numbers of all perfect matchings of $H_n$ is
\begin{eqnarray}\label{eq106}
\nonumber \FS{d}{dx}Af(H_n,x)\big|_{x=1}&=&\FS{3\sqrt{2}}{64}(3-2\sqrt{2})^n+\frac{17-12\sqrt{2}}{16}n(3-2\sqrt{2})^n
-\FS{3\sqrt{2}}{64}(3+2\sqrt{2})^n\\
&~&+\frac{17+12\sqrt{2}}{16}n(3+2\sqrt{2})^n.
\end{eqnarray}
\end{theorem}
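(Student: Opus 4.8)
The plan is to follow the proof of Theorem~\ref{main5} essentially line for line, with the forcing recurrence replaced by the anti-forcing recurrence~\eqref{eq100}. Write $\Phi_n:=\Phi(H_n)=Af(H_n,1)$ and let $D_n:=\frac{d}{dx}Af(H_n,x)\big|_{x=1}$ denote the quantity to be evaluated. Differentiating~\eqref{eq100} with respect to $x$ and then setting $x=1$ (using $\frac{d}{dx}(2x^3+2x^2+2x)=6x^2+4x+2$ and $\frac{d}{dx}(x^2)=2x$, whose values at $x=1$ are $12$ and $2$) produces the inhomogeneous recurrence
\begin{equation*}
D_n=6D_{n-1}-D_{n-2}+12\Phi_{n-1}-2\Phi_{n-2},\qquad n\ge 2 .
\end{equation*}
This is the exact analogue of the intermediate identity in the proof of Theorem~\ref{main5}, the only change being the coefficient $12$ (rather than $10$) multiplying $\Phi_{n-1}$.

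Next I would eliminate the $\Phi$-terms exactly as in the derivation of~\eqref{eq88}. Shifting the index to write down the identities for $D_{n+1}$ and $D_{n+2}$, and then repeatedly substituting $\Phi_m=6\Phi_{m-1}-\Phi_{m-2}$ from~\eqref{eq77} together with the recurrence for $D_m$ itself, expresses $D_{n+2}$ purely in terms of $D_{n+1},D_n,D_{n-1},D_{n-2}$. The resulting homogeneous recurrence is
\begin{equation*}
D_{n+2}=12D_{n+1}-38D_n+12D_{n-1}-D_{n-2},
\end{equation*}
exactly as for $IDF_n$ in~\eqref{eq88}. Conceptually this is forced: the inhomogeneity of the first recurrence is a linear combination of the $\Phi_m$, which are annihilated by the shift operator with characteristic polynomial $x^2-6x+1$, so applying that operator a second time annihilates $D_n$; hence the characteristic polynomial of $D_n$ is $(x^2-6x+1)^2=x^4-12x^3+38x^2-12x+1$, with double roots $3-2\sqrt{2}$ and $3+2\sqrt{2}$. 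The general solution therefore has the form
\begin{equation*}
D_n=\mu_1(3-2\sqrt{2})^n+\mu_2\,n(3-2\sqrt{2})^n+\mu_3(3+2\sqrt{2})^n+\mu_4\,n(3+2\sqrt{2})^n .
\end{equation*}

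Finally I would pin down $\mu_1,\dots,\mu_4$ from four initial values. Starting from $Af(H_0,x)=1$ and $Af(H_1,x)=2x^3+2x^2+2x$ and iterating~\eqref{eq100} to generate $Af(H_2,x),Af(H_3,x),\dots$, one reads off $D_0=0$, $D_1=12$, and a couple of further values by differentiating at $x=1$; feeding these into four instances of the general solution yields a $4\times4$ linear system over $\mathbb{Q}(\sqrt{2})$ whose solution is $\mu_1=\frac{3\sqrt{2}}{64}$, $\mu_2=\frac{17-12\sqrt{2}}{16}$, $\mu_3=-\frac{3\sqrt{2}}{64}$, $\mu_4=\frac{17+12\sqrt{2}}{16}$, which is precisely~\eqref{eq106}. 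Since the closed form is only forced once enough terms are available to launch the order-$4$ recurrence, I would finish, as in Theorem~\ref{main5}, by verifying directly that~\eqref{eq106} also reproduces the smallest values $D_0,D_1,D_2$. The only genuine work is bookkeeping: carrying out the elimination of the $\Phi$-terms without sign errors and solving the $4\times4$ system in $\sqrt{2}$-arithmetic for the four coefficients. I expect the coefficient-matching to be the main obstacle, since it is the most error-prone step; everything else is immediate once one notes that $D_n$ must obey the squared characteristic polynomial $(x^2-6x+1)^2$.
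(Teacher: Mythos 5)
Your proposal follows essentially the same route as the paper's proof: differentiate the recurrence \eqref{eq100} at $x=1$ to obtain $D_n=6D_{n-1}-D_{n-2}+12\Phi_{n-1}-2\Phi_{n-2}$, eliminate the $\Phi$-terms via \eqref{eq77} to reach the order-four homogeneous recurrence with characteristic polynomial $(x^2-6x+1)^2$ and double roots $3\pm2\sqrt{2}$, then fix the four coefficients from initial values and check the remaining small cases. The only cosmetic differences are your annihilator-operator justification of the elimination step and your choice of early initial values ($D_0=0$, $D_1=12$, \dots) where the paper fits at $n=5,\dots,8$ and verifies $n=0,\dots,4$ separately; both are correct.
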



\begin{proof}
By Theorem \ref{main7},
\begin{eqnarray}\label{eq107}
\nonumber \FS{d}{dx}Af(H_n,x)&=&(6x^2+4x+2)Af(H_{n-1},x)+(2x^3+2x^2+2x)\FS{d}{dx}Af(H_{n-1},x)\\
&~&-2xAf(H_{n-2},x)-x^2\FS{d}{dx}Af(H_{n-2},x).
\end{eqnarray}
For convenience, let $\Phi_n:=\Phi(H_n)$ and $AF_n:=\frac{d}{dx}Af(H_n,x)\big|_{x=1}$,
by Eq. (\ref{eq107}), we have
\begin{eqnarray}\label{eq108}
AF_n=6AF_{n-1}-AF_{n-2}+12\Phi_{n-1}-2\Phi_{n-2}.
\end{eqnarray}
By Eq. $(\ref{eq77})$, $\Phi_n=6\Phi_{n-1}-\Phi_{n-2}$,
so $AF_n=6AF_{n-1}-AF_{n-2}+2\Phi_{n}$,
which implies $2\Phi_{n}=AF_n-6AF_{n-1}+AF_{n-2}$.
Therefore $2\Phi_{n-1}=AF_{n-1}-6AF_{n-2}+AF_{n-3}$
and $2\Phi_{n-2}=AF_{n-2}-6AF_{n-3}+AF_{n-4}$, substituting
them into Eq. (\ref{eq108}), we obtain the following recurrence formula
\begin{equation}\label{eq109}
AF_n=12AF_{n-1}-38AF_{n-2}+12AF_{n-3}-AF_{n-4}.
\end{equation}
Note that recurrence formulas $(\ref{eq88})$ and $(\ref{eq109})$
have the same homogeneous characteristics equation,
so the general solution of Eq. $(\ref{eq109})$ is $AF_{n}=\lambda_{1}(3-2\sqrt{2})^n+\lambda_{2}n(3-2\sqrt{2})^n
+\lambda_{3}(3+2\sqrt{2})^n+\lambda_{4}n(3+2\sqrt{2})^n$.
By the initial values $AF_{5}=70956$,
$AF_{6}=496794$, $AF_{7}=3380640$ and $AF_{8}=22531256$,
we have $\lambda_{1}=\frac{3\sqrt{2}}{64}$, $\lambda_{2}=\frac{17-12\sqrt{2}}{16}$,
$\lambda_{3}=-\frac{3\sqrt{2}}{64}$ and $\lambda_{4}=\frac{17+12\sqrt{2}}{16}$,
so Eq. $(\ref{eq106})$ holds for $n\geq5$.
We can check that Eq. (\ref{eq106}) also holds for $n=0,1,2,3,4$,
the proof is completed.
\end{proof}

By Eq. $(\ref{eq78})$ and Eq. $(\ref{eq106})$,
we can prove the following corollary.

\begin{corollary}{\bf .}
Let $H_n$ be a pyrene system with $n$ pyrene fragments. Then
\begin{equation}
\nonumber \lim\limits_{n\rightarrow \infty}\FS{AF_n}{n\Phi_n}=1+\FS{3\sqrt{2}}{4}.
\end{equation}
\end{corollary}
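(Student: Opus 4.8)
The plan is to feed the two closed forms already in hand—Eq.~(\ref{eq78}) for $\Phi_n$ and Eq.~(\ref{eq106}) for $AF_n$—into a direct asymptotic comparison. First I would abbreviate $\alpha=3+2\sqrt{2}$ and $\beta=3-2\sqrt{2}$ and record the elementary facts that decide everything: $0<\beta<1<\alpha$ (numerically $\beta\approx0.172$, $\alpha\approx5.828$) and $\alpha\beta=9-8=1$, so that $\beta/\alpha=\beta^{2}\in(0,1)$. These inequalities guarantee that every $\beta^{n}$-term, and indeed $n\beta^{n}$ as well, is exponentially negligible against the $\alpha^{n}$-terms, which is the only mechanism needed to isolate the leading behaviour.

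Next I would divide both $AF_n$ and $\Phi_n$ by $\alpha^{n}$. From Eq.~(\ref{eq106}),
\begin{equation}
\nonumber \frac{AF_n}{\alpha^{n}}=\frac{17+12\sqrt{2}}{16}\,n-\frac{3\sqrt{2}}{64}+\frac{17-12\sqrt{2}}{16}\,n\Big(\frac{\beta}{\alpha}\Big)^{n}+\frac{3\sqrt{2}}{64}\Big(\frac{\beta}{\alpha}\Big)^{n},
\end{equation}
and since $0<\beta/\alpha<1$ both $(\beta/\alpha)^{n}\to0$ and $n(\beta/\alpha)^{n}\to0$; dividing once more by $n$ then kills the bare constant $-\tfrac{3\sqrt{2}}{64}$ as well, leaving $AF_n/(n\alpha^{n})\to(17+12\sqrt{2})/16$. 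In the same way, from Eq.~(\ref{eq78}),
\begin{equation}
\nonumber \frac{\Phi_n}{\alpha^{n}}=\frac{17+12\sqrt{2}}{16+12\sqrt{2}}+\frac{17-12\sqrt{2}}{16-12\sqrt{2}}\Big(\frac{\beta}{\alpha}\Big)^{n}\longrightarrow\frac{17+12\sqrt{2}}{16+12\sqrt{2}},
\end{equation}
a nonzero constant since $16+12\sqrt{2}\approx32.97$ and $17+12\sqrt{2}\approx33.97$ are both positive.

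Finally I would write $\dfrac{AF_n}{n\Phi_n}=\dfrac{AF_n/(n\alpha^{n})}{\Phi_n/\alpha^{n}}$ and pass to the limit in numerator and denominator separately, which is legitimate because the denominator tends to a nonzero value. This gives
\begin{equation}
\nonumber \lim_{n\to\infty}\frac{AF_n}{n\Phi_n}=\frac{(17+12\sqrt{2})/16}{(17+12\sqrt{2})/(16+12\sqrt{2})}=\frac{16+12\sqrt{2}}{16}=1+\frac{3\sqrt{2}}{4},
\end{equation}
which is exactly the asserted value. There is no genuine obstacle in the argument; the only point demanding care is the bookkeeping that singles out $\tfrac{17+12\sqrt{2}}{16}n\alpha^{n}$ as the unique dominant term of $AF_n$—in particular noticing that the $\alpha^{n}$ term without the factor $n$ is extinguished by the additional division by $n$—and confirming that the limiting denominator is nonzero so that the ratio of limits may be taken.
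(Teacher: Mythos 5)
Your proof is correct and takes essentially the same route the paper intends: the corollary is stated there as an immediate consequence of the closed forms in Eqs.~(\ref{eq78}) and (\ref{eq106}), i.e.\ exactly the comparison of dominant terms you carry out, with the leading term $\frac{17+12\sqrt{2}}{16}\,n\,(3+2\sqrt{2})^{n}$ of $AF_n$ set against the leading term $\frac{17+12\sqrt{2}}{16+12\sqrt{2}}\,(3+2\sqrt{2})^{n}$ of $\Phi_n$. Your only addition is to make explicit the bookkeeping the paper leaves implicit, namely that $(\beta/\alpha)^{n}=\beta^{2n}\to 0$ (and $n(\beta/\alpha)^{n}\to 0$) and that the limiting denominator is nonzero, so the argument is complete as written.
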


\cite{}

\end{document}